\theoremstyle{plain}
\newtheorem{theorem}{Theorem}[section]
\newtheorem{corollary}[theorem]{Corollary}
\newtheorem{lemma}[theorem]{Lemma}
\theoremstyle{definition}
\theoremstyle{remark}
\def\zbar{\overline{z}}
\def\mR{\mathbb{R}}
\def\J1inv{\widehat{J}_0}
\begin{document}



\title{\textit{Classification of critical sets and their images for quadratic maps of the plane}}

\author{Chia-Hsing Nien\\
Department of Financial and Computational Mathematics\\
Providence University\\
Taichung City 43301 Taiwan\\
chnien@pu.edu.tw\\ \ \\
Bruce B. Peckham\\
Department of Mathematics and Statistics\\
University of Minnesota Duluth, Duluth, MN 55812, USA\\
  bpeckham@d.umn.edu\\ \ \\
Richard P. McGehee\\
School of Mathematics\\ University of Minnesota, Minneapolis, MN 55455, USA\\
mcgehee@umn.edu}

\maketitle



\pagestyle{myheadings}
\thispagestyle{plain}
\markboth{\sc Nien, Peckham \& McGeghee}
{\sc Critical sets and their images for quadratic maps of the plane}

\begin{abstract}
We provide a complete classification of the critical sets and their images for quadratic maps of the real plane.
Critical sets are always conic sections, which provides a starting point for the classification.
The generic cases, maps whose critical sets are either ellipses or hyperbolas, was published in Delgado, {\it et al.} \cite{DGRRV}.
This work completes the classification by including all the nongeneric cases:
 the empty set, a single point, a single line, a parabola, two parallel lines, two intersecting lines, or the whole plane.
We describe all possible images for each critical set case and illustrate the geometry of representative maps for each case.

\end{abstract}

\noindent
{\bf Keywords:}
\texttt Quadratic maps, maps of the real plane, singularities, critical sets, geometric equivalence


\section{Introduction}

We are interested in studying the dynamics of quadratic maps of the real plane $\mR^2$.
Even in this restricted class, there is a huge collection of map behaviors and bifurcations that is still unknown.
This may seem surprising at first, but becomes less so when one reflects on the depth of the dynamics in the study of special cases such as the complex quadratic family $z^2+c$ \cite{Devaney89book, Milnorbook} or the Henon map \cite{Henon}.
Many researchers have investigated spectacular behavior of other subfamilies of quadratic maps.
Most such studies, with good reason, have restricted their studies to families with only one or two parameters.
For representative studies, see 
\cite{AGMbook, ACHM,  FKP, GuM1, GuM2, Lorenz, MiraGBCbook, RRVil, RRViv, RSV}.
Research more in the spirit of our classification approach (described below), but still for a restricted set of quadratic maps, includes \cite{BGVRR}, where the authors study quadratic maps with no fixed points, and \cite{Nien98} where maps with bounded critical sets (points or ellipses) are studied.

In this paper, we take a complementary approach.
Rather than a fairly complete understanding of the dynamics of a small family of quadratic maps, we obtain a complete understanding for {\it all} quadratic maps, but only for their critical sets and images.
The fact that the behavior of critical sets can completely determine the dynamics of a map is well-established in one-dimensional real and complex dynamics \cite{Devaney89book, dMvSbook, Milnorbook}.
Such a complete classification based on critical orbits is not established for noninvertible maps of the real plane,
but critical orbits are still clearly important \cite{MiraNara,FGKMM, MiraMG}.
The study of critical orbits in the real plane setting is more difficult in part because the critical sets are typically curves rather than the isolated points
which typically occur in one-dimensional real or complex maps. 
Here we consider only the critical set and its first image, rather than the full orbits of critical sets. 
With this coarser classification we gain the ability to attain a complete classification of all quadratic maps of the plane.
From this perspective, this paper is more of a global singularity classification than a dynamical classification.

{\bf Problem history and significance.}
Much of the work presented in this paper appeared in 1997 as a chapter in the Ph.~D.~thesis \cite{Nienthesis} of author CHN under the direction of author RPM but was not peer reviewed at that time.
Nien classified the images of critical sets for quadratic maps in cases where the critical set was given by a quadratic in two variables having at least one nonzero quadratic term.
(In Theorem \ref{th-J0J1} below, this includes cases 3,4,5,6, and 7 of cases 1-9.)
The only published work of which we are aware which overlaps significantly with the current paper is 2013 paper by Delgado, et al.
\cite{DGRRV}
where the authors consider the two generic cases (3 and 4 in Theorem \ref{th-J0J1}) where the critical set is an ellipse or a hyperbola.
In that work the authors obtain a stronger result for these two cases than stated in our Theorem \ref{th-J0J1}.
They obtain a full geometric equivalence classification of these maps, not just a classification of the critical points and their images as we provide in this paper.
See also \cite{GRRV}, a 2005 preprint version
of \cite{DGRRV}.
In the current paper we complete the classification to include all cases for quadratic maps of the plane.

We emphasize that while the `nongeneric' cases (in the coefficient space of the quadratic maps) are unlikely for random choices of coefficients, they appear quite frequently in the literature of quadratic maps.
For example, the class that many consder the simplest noninvertible quadratic maps are the so-called $Z_0$-$Z_2$ maps, which all have a line as a critical set and a critical image \cite{AGMbook, GuM1, GuM2, MiraGBCbook}.  Examples with a parabola as a critical set include \cite{FKP, Lorenz}. A critical set of parallel lines is included in \cite{MiraGBCbook}. The delayed logistic example in \cite{ACHM} has a critical line but its image is a point. 
Complex quadratic maps have a critical point.
Of course, nongeneric cases are extremely important in the bifurcation theory associated with analyzing families of quadratic maps. Nongeneric cases necessarily appear in transitions between generic cases.

{\bf Background.} 
We start with the most general quadratic map of the real plane, $F:\mR^2 \rightarrow \mR^2$, with twelve coefficient parameters.

\begin{align}
\nonumber
F(x,y)=(a_0x^2&+a_1xy+a_2y^2+a_3x+a_4y+a_5,\\ &b_0x^2+b_1xy+b_2y^2+b_3x+b_4y+b_5)
\label{eq-genquad}
\end{align}

\noindent
We assume that at least one of the six quadratic coefficients is nonzero, so we exclude affine maps of the plane;
the dynamics of affine planar maps is well-known.
We define the critical set $J_0^F$ (or just $J_0$ when the map is clear) by
\begin{eqnarray}
        J_0^F = \left\{(x,y)\in \mR^2 \,|\;
        \det(\mathit{DF}\left(x,y\right)) =0 \right\}.
\end{eqnarray}
The image $J_1 = F(J_0)$ is called the {\it critical image} or {\em critical locus}.  

Generally, noninvertibility gives rise to regions with
different numbers of preimages.
The critical locus $J_1$ divides the
phase plane $\mR^2$ into regions with a constant number of pre-images.
These regions are usually labeled by $Z_k$, where $k$ is the number of
pre-images in that region \cite{MiraMG}.  
The map $f$ folds the phase plane along smooth
curves of $J_0$.
Its image $J_1$ is a smooth curve except possibly at isolated cusp points.
Therefore, generically, the number of
pre-images differs locally by two on either side of $J_1$ \cite{Arnold, GG},
and as one moves from one region into the next
by crossing a fold curve the number of pre-images changes from $k$ to $k \pm 2$.
This principle is violated in several nongeneric cases, where two branches of $J_0$ curves are mapped to the same curve in $J_1$, creating a change in preimages by four instead of two (cases 5a and 5b in Fig.~\ref{fig-surfs} below), or where whole curves from $J_0$ are mapped to a single point (cases 7a, 7b and 8a in Fig.~\ref{fig-surfs}). It is also violated when $J_0$ does not consist of one or more curves.

We will be especially interested in the existence of `cusps' on otherwise smooth curves of $J_1$.
The term `cusp' is used in different contexts in dynamical systems.
First, the sense in which we use `cusp' in this paper, is to indicate a certain non-smooth point along a plane curve where the curve to either side of the cusp has a common tangent, but as one passes through the cusp point, the tangent vector switches direction; second, it is used in singularity theory for a distinguished point on of a map of the plane which is locally conjugate to 
$(x,y) \mapsto (x, xy-y^3)$ near the origin \cite{Arnold, GG}; and third, it is used in bifurcation theory to denote a certain codimension-two bifurcation point along an otherwise smooth saddle-node bifurcation curve.
Since our focus is on the critical curves $J_0$ and their images $J_1$ as plane curves, in this paper `cusp' is used in the first sense.
We note, however, all the plane curve cusps we consider in this paper are on $J_1$, and in each case, the corresponding map of the plane appears to satisfy the additional conditions to be a cusp in the singularity sense for a map of the plane.
Such plane maps have three local preimages `inside' the cusped $J_1$ curve, and a single local preimage `outside' the cusped curve.
Consistent with this singularity theory model in the second case above, all the cusps we verify in this paper are `of order $3/2$', as in curves parametrized by
$t\mapsto (at^2, bt^3)$ for nonzero constants $a$ and $b$;
cusps with this parametrization are tangent to the positive $x$-axis when $a>0$.

Since the partial derivatives of a quadratic are linear functions of $x$ and $y$, the determinant of the two-by-two Jacobian derivative matrix $DF(x,y)$ is a quadratic in $x$ and $y$:

\begin{align}
\det &\left( DF(x,y)\right)=\begin{vmatrix}
2a_0x+a_1y+a_3 &a_1x+2a_2y+a_4\\
\nonumber
 2b_0x+b_1y+b_3 &b_1x+ba_2y+b_4
\end{vmatrix}\\
\nonumber
&=
(2a_0x+a_1y+a_3)(b_1x+ba_2y+b_4)-
(a_1x+2a_2y+a_4)(2b_0x+b_1y+b_3)\\
\nonumber
&\equiv Ax^2+Bxy+Cy^2+Dx+Ey+F\\
&=2X_{01}x^2+4X_{02}xy+2X_{12}y^2+(2X_{04}-X_{13})x+(X_{14}-2X_{23})y + X_{34}
\label{eq-jacdet}
\end{align}

\noindent
where
$X_{ij}=a_ib_j-a_jb_i=\begin{vmatrix}a_i&a_j\\b_i&b_j\end{vmatrix}$.
The $X_{ij}$ notation is taken from \cite{Nienthesis}.
\noindent
Thus $J_0$ is a conic section, possibly degenerate.

For use later in the paper, we recall the following standard results about conic sections.
Consider the general conic section, which is the zero set of 

\[
\begin{pmatrix}
x&y&1
\end{pmatrix}
\begin{pmatrix}
A& B/2& D/2\\
B/2 &C &E/2\\
D/2  &E/2&F                  
\end{pmatrix}
\begin{pmatrix}
x\\y\\1
\end{pmatrix}
= Ax^2 + Bxy +Cy^2+Dx+Ey+F.
\]

\noindent
Let $\mathcal{D}=B^2-4AC$, and 

\begin{equation}
\label{eq-Delta}
\Delta \equiv \begin{vmatrix}
A& B/2& D/2\\
B/2 &C &E/2\\
D/2  &E/2&F
\end{vmatrix}
\end{equation}

\noindent
The conic section is considered {\it nondegenerate} if $\Delta \ne 0$.  A nondegenerate conic section
is an ellipse if $\mathcal{D} <0$, a hyperbola if 
$\mathcal{D} >0$, and a parabola if $\mathcal{D} =0$. 
In the case of an ellipse, which requires $A$ and $C$ to be nonzero and have the same sign, it is real if $C\Delta<0$, a point if $\Delta=0$, and imaginary if $C\Delta>0$.  
(The case of an imaginary ellipse is not possible for conic sections that arise as Jacobians of quadratic maps; see section \ref{sec:discussion}.) 
A degenerate hyperbola is a pair of intersecting lines. 
A degenerate parabola yields two parallel lines, possibly coinciding, and possibly imaginary.
(As for the imaginary ellipse, the imaginary pair of lines is not possible as the singular set of a quadratic map.)
In addition, the Jacobian determinant of a quadratic map can fail to have any nonzero quadratic terms ($A=B=C=0$). 
In this case, the zero set is a line if at least one of $D$ or $E$ is nonzero, the empty set if, in addition $D=E=0$ but $F \ne 0$, and the whole plane if all coefficients, including $F$ vanish.
These conic section facts determine the classification of $J_0$.

{\bf Results.}
It turns out that in many cases, the geometry of $J_1$ is
completely determined by $J_0$.
In other cases, there are several possibilities for $J_1$.
The following theorem provides a complete list of all
possibilities for both $J_0$ and $J_1$.
It therefore provides a complete classification of critical sets and their images for all quadratic maps.

\begin{theorem}
\label{th-J0J1}
The $J_0$-$J_1$ classification theorem.
Let $F:\mR^2 \rightarrow \mR^2$ be a quadratic map of $\mR^2$ with at least one nonzero quadratic term.
Then $J_0$ and $J_1$ take on one of the following forms:

\begin{enumerate}
\item $J_0$ is empty; $J_1$ is empty
\item $J_0$ is a point; $J_1$ is a point
\item $J_0$ is an ellipse;
$J_1$ is a closed curve with three cusp points
\item $J_0$ is a hyperbola; $J_1$ consists of two curves, one smooth, and the other smooth except for a single cusp; each curve is the image of one branch of the hyperbola
\item $J_0$ is a pair of intersecting lines; $J_1$ is one of the following:
\begin{enumerate}
\item the union of two rays emanating from the same point
\item the union of a ray and a parabola sharing a common point
\end{enumerate}
\item $J_0$ is a parabola; $J_1$ a curve with a single cusp

\item $J_0$ is a pair of parallel lines
\begin{enumerate}
\item distinct lines: $J_1$ is the union of a line and a point. One of the lines in $J_0$ maps to the line in $J_1$ and the other line maps to the point in $J_1$.
\item coincident lines: $J_1$ is a point.
\end{enumerate}
\item $J_0$ is a simple line; $J_1$ is one of the following.  
\begin{enumerate}
\item a point
\item a line
\item a parabola
\end{enumerate}
\item $J_0$ is all of $\mR^2$; $J_1$ is one of the following:
\begin{enumerate}
\item a line
\item a ray
\item a parabola
\end{enumerate}
\end{enumerate}
\end{theorem}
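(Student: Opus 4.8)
The plan is to prove the theorem in two stages: first classify $J_0$, then, for each type of $J_0$, classify $J_1$. Stage one is almost immediate from the discussion above: by \eqref{eq-jacdet} $\det(DF)$ is a quadratic polynomial in $(x,y)$, so $J_0$ is a (possibly degenerate) conic section, and the conic-section facts recalled above leave precisely the nine listed possibilities for $J_0$ — once one knows, as will be shown in Section \ref{sec:discussion}, that an imaginary ellipse and an imaginary pair of parallel lines cannot arise as the zero set of a Jacobian determinant of a quadratic map. For stage two I would exploit affine equivalence on both sides: if $A,B$ are invertible affine maps of $\mR^2$ then $\widetilde F=B\circ F\circ A$ is again quadratic with $J_0^{\widetilde F}=A^{-1}(J_0^F)$, $J_1^{\widetilde F}=B(J_1^F)$, and $\det D\widetilde F$ a nonzero scalar multiple of $(\det DF)\circ A$, so the shapes of $J_0$ and $J_1$ are affine invariants and normalizing the position of $J_0$ is an admissible coordinate change. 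For each conic type I would use $A$ to put $J_0$ into a standard position (the unit circle; the hyperbola $xy=1$; the parabola $y=x^2$; the coordinate axes $xy=0$; the lines $x^2=1$ or the double line $x^2=0$; the line $x=0$; all of $\mR^2$; a point; the empty set); imposing that $\det DF$ equal a nonzero multiple of the corresponding standard quadratic then pins down most of the twelve coefficients of $F$, and, after using the residual freedom together with the choice of $B$ to reduce $F$ to a short normal form, I would parametrize the standard $J_0$, compose with $F$, and read off $J_1$ as the image of a polynomial-parametrized curve.

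Carrying this out is routine in most cases. Cases 1 and 2 are trivial. For a line (case 8), $J_0=\{x=0\}$ forces $\det DF=Dx$ and $F(0,y)=(a_2y^2+a_4y+a_5,\ b_2y^2+b_4y+b_5)$, a parametrized curve of degree at most two whose image is a point, a line, or a parabola according to whether the quadratic-coefficient vector $(a_2,b_2)$ is zero, nonzero and parallel to $(a_4,b_4)$, or neither; each alternative is realized by an explicit $F$ compatible with $\det DF=Dx$ (for instance $F(x,y)=(xy,x)$ has $\det DF=-x$ and collapses $\{x=0\}$ to the origin). Cases 5 and 7 are handled the same way: each line (or hyperbola branch) is parametrized by a scalar, the restriction of $F$ to it is a degree-$\le2$ curve, and one matches the possible image shapes against the polynomial constraints that $\det DF$ be a nonzero multiple of $xy$, respectively of $x^2-1$ or $x^2$; this bookkeeping is exactly what produces the combinations 5a, 5b, 7a, 7b. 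For the parabola $J_0=\{y=x^2\}$ (case 6), $F(t,t^2)$ is a degree-$\le4$ parametrized curve whose image is a single curve, smooth except for one cusp (verified as below). The whole-plane case (9) is structurally different: $\det DF\equiv0$ means $DF$ has rank at most one everywhere, so $F$ factors, up to affine changes, as $\phi\circ g$ with $g:\mR^2\to\mR$ and $\phi:\mR\to\mR^2$ polynomial; degree considerations ($\deg F\le2$, and $F$ nonconstant) force either $g$ affine with $\phi$ quadratic — giving $J_1$ a line or a parabola — or $g$ genuinely quadratic with $\phi$ affine, in which case $J_1=\phi(g(\mR^2))$ is the affine image of $g(\mR^2)$, a ray when $g(\mR^2)$ is a half-line and a line when $g(\mR^2)=\mR$; this yields the three possibilities of case 9.

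The cusp claims in cases 3, 4, and 6 follow from a single mechanism. If $\gamma(t)$ parametrizes a branch of $J_0$ then $c(t):=F(\gamma(t))$ parametrizes the corresponding branch of $J_1$ and $c'(t)=DF(\gamma(t))\,\gamma'(t)$; since $\det DF=0$ on $J_0$, the matrix $DF(\gamma(t))$ has rank at most one, and at points where it has rank exactly one — the only relevant points in these cases — the curve $J_1$ is smooth except where $\gamma'(t)\in\ker DF(\gamma(t))$, i.e.\ where the tangent direction of $J_0$ meets the fold direction. A Taylor expansion of $c$ at such a parameter value, after verifying the genericity conditions (which hold for quadratic maps here), produces a cusp of order $3/2$, in agreement with the fold and cusp normal forms $(u,v)\mapsto(u,v^2)$ and $(u,v)\mapsto(u,uv-v^3)$ of singularity theory. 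It remains to count these parameter values. After affine normalization the ellipse case reduces to a normal form whose critical circle and its image are, in complex notation, $t\mapsto\tfrac a2 e^{it}$ and $c(t)=\tfrac{a^2}{4}\bigl(e^{2it}+2e^{-it}\bigr)$, so $c'(t)=0$ precisely when $e^{3it}=1$ — three cusps; the analogous count gives one cusp per branch of a hyperbola and one for a parabola. For cases 3 and 4 the stronger, full geometric-equivalence classification, which contains these cusp counts, is due to Delgado et al.\ \cite{DGRRV}, and I would cite it there.

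The main obstacle is not any single computation but the bookkeeping that makes the sub-case lists in cases 5, 7, 8, and 9 both exhaustive and sharp: one must organize the polynomial conditions on the twelve coefficients of $F$ so that every listed shape of $J_1$ is realized by an explicit map and no other shape can occur, which is a finite but delicate case analysis, and one must carry out the cusp count for the ellipse with care (though it is available from \cite{DGRRV}). The exclusion of imaginary conics as critical sets, used in stage one, is handled separately in Section \ref{sec:discussion}.
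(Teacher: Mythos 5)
Your overall strategy --- affine normalization of $J_0$ on the source side and of $J_1$ on the target side, parametrization of the standard critical set, and a Taylor-expansion cusp criterion at parameters where $\gamma'(t)\in\ker DF(\gamma(t))$ --- is the same as the paper's. But the substance of the theorem lies in the \emph{exclusions} within cases 5, 7, 8, and 9, and you have deferred precisely those as ``routine bookkeeping'' without supplying the mechanism that makes them work. In the paper that mechanism is the system of Pl\"ucker-type identities among the minors $X_{ij}=a_ib_j-a_jb_i$ (Lemma \ref{lemma:identities}), combined with the six equations obtained by matching the coefficients of $\det(DF)$ to the normalized quadratic: for example, in case 5 one must prove $X_{03}X_{24}=0$ (so at least one branch of $J_1$ is a ray, ruling out two parabolas); in case 7a one must prove $X_{13}^2=1$ (so exactly one of the two lines collapses to a point); in case 8 one must prove that a ray is impossible. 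Nothing in your sketch produces these facts, and without them the sub-case lists are neither exhaustive nor sharp.

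Several of your concrete assertions are also wrong. In case 8 you classify the image of $t\mapsto(a_2t^2+a_4t,\,b_2t^2+b_4t)$ as ``a point, a line, or a parabola according to whether $(a_2,b_2)$ is zero, nonzero and parallel to $(a_4,b_4)$, or neither'': when $(a_2,b_2)\ne(0,0)$ is parallel to $(a_4,b_4)$ the image is a \emph{ray}, not a line, and excluding that ray (using the constraint that $\det(DF)$ is a nonzero multiple of $x$) is the entire content of case 8 in the paper --- your trichotomy silently erases the one subcase that needs an argument. Your cusp count ``one cusp per branch of a hyperbola'' contradicts the statement you are proving (case 4: one image branch is smooth, the other carries the single cusp); the correct computation yields exactly one singular parameter value $T$ on the whole hyperbola. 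Finally, the factorization $F=\phi\circ g$ in case 9 from $\mathrm{rank}\,DF\le 1$ is asserted rather than proved (the rank can drop to zero, and a global polynomial factorization needs justification), and even granting it, $\phi$ quadratic composed with $g$ affine can yield a ray, not only ``a line or a parabola'' as you claim. Citing \cite{DGRRV} for cases 3 and 4 is legitimate (the paper itself notes this), but the remaining cases require the coefficient-level analysis you have skipped.
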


Examples: The existence of all the cases in Theorem \ref{th-J0J1} is provided by the following examples for $F(x,y)$, illustrated in Fig.~\ref{fig-surfs}.
\begin{enumerate}
\item  $(1-ax^2+y,bx)$, $b \ne 0$.
$\det(DF(x,y))=-b$.
These are the invertible Henon maps as long as $b \ne 0$. 
$J_0$ and $J_1$ are empty.
Every point has a unique preimage.
\item $(x^2-y^2+c_1, 2xy+c_2)$ or $(x^2-y^2+c_1, xy+c_2)$.
The first example family is equivalent to $z^2+c$ in complex coordinates; the second family is not complex analytic.
In the first family, $\det(DF(x,y))=4(x^2+y^2)$, and in the second family, $\det(DF(x,y))=2(x^2+y^2)$.
In both families, $J_0$ is $(0,0)$ and $J_1$ is $(c_1, c_2)$.
The plane is double-covered except for $(c_1,c_2)$, which has the origin as its only preimage.
Circles centered at the origin map to ellipses (circles in the complex analytic example) centered at $(c_1,c_2)$, with the image ellipses double-covered.
\item $(x^2-y^2+2x, 2xy-2y)$.
This is equivalent to $z^2+2\zbar$ in complex coordinates.
$\det(DF(x,y))=4(x^2+y^2)-4$, so
$J_0$ is the unit circle; $J_1$ is a deltoid (a hypocycloid with 3 cusps).
The exterior of the deltoid has two preimages;  the interior has four preimages; points on $J_1$ have three preimages, except the three cusp points which have two preimages.
\item $(x^2+y^2+2x, 2xy-2y)$. 
$\det(DF(x,y)=4(x^2-y^2)-4$, so $J_0$ is the hyperbola 
$x^2-y^2=1$; the image of the left branch is smooth; the image of the right branch has a single cusp at $(3,0)$, the image of $(1,0)$.
The number of preimages changes from zero to the left of the smooth branch of $J_1$, two in between the two branches of $J_1$, and four to the right of the cusped branch.
The smooth piece of $J_1$ has only the left branch of the $J_0$ hyperbola as a preimage.
The right (cusped) branch of $J_1$ has unique preimages points on the right branch of the $J_0$ hyperbola, and two additional preimages, except for the cusp point at $(3,0)$ which has only one additional preimage besides $(1,0)$.
\item $J_0$ is a pair of intersecting lines:

\begin{enumerate}

\item $(x^2+y, y^2)$. 
$\det(DF(x,y))=4xy$, so $J_0$ is the union of the two axes.  $J_1$ is the union of a ray -- the nonnegative $x$ axis -- and the parabola $\{ y=x^2\}$, both emanating from the origin.
The part of the second quadrant to the left of the $J_1$ parabola and the third and fourth quadrants have no preimages.
The region `inside' the parabola has two preimages.
The region in the first quadrant but not inside the parabola has four preimages.
The left branch of the $J_1$ parabola has unique preimages (on the negative $y$ axis), the right branch of the $J_1$ parabola has two additional preimages (three total) besides the positive $y$ axis.
The positive $x$ axis has two preimages (one each on the positive and negative $x$ axis), and the origin has only itself as a preimage.

\item $(x^2, y^2)$. 
$\det(DF(x,y))=4xy$, so, again, $J_0$ is the union of the two axes. $J_1$ is the union of two rays -- the nonnegative axes -- emanating from the origin.
There are no preimages from the second, third, and fourth quadrants.
The first quadrant has four preimages.  The positive axes have two preimages, and the origin has only itself as a preimage.

\end{enumerate}
\item $(\frac{1}{2}x^2+y, xy)$. 
$\det(DF(x,y))=x^2 -y$, and $J_0$ is the parabola $y=x^2$; $J_1$ a curve with a single cusp, at $(0,0)$.
Points to the left of the cusped $J_1$ curve have one preimage; points to the right have three preimages; points on $J_1$ have two preimages, one in addition to the preimage on the $J_0$ parabola; the origin has only itself as a preimage.

\item $J_0$ is a pair of parallel lines
\begin{enumerate}
\item $J_0$ is a pair of distinct parallel lines:
$\frac{1}{2}x^2+\epsilon x, xy-\epsilon y)$. 
$\det(DF(x,y))=(x+\epsilon)(x-\epsilon)$, 
$J_0= \{x=-\epsilon\} \bigcup \{x=\epsilon\}$;
$J_1$ is the union of a line and a point; $\{x=-\epsilon\}$ maps to the line $\{x=-\epsilon^2/2\}$; $\{x=\epsilon\}$ maps to the point $(3\epsilon^2/2,0)$.
Fig \ref{fig-surfs} displays this case for $\epsilon=1$.

\item $J_0$ is a pair of coincident lines: $(\frac{1}{2}x^2,xy)$.
$\det(DF(x,y))=x^2$, so $J_0$ is the line $\{x=0\}$; it is distinguished from the next subcase because the line for $J_0$ is really a coincident pair of  lines; it is a limit of case 7a, as $\epsilon$ approaches zero.  $J_1$ is the origin; the $y$ axis has no preimages except for the origin which has the whole $y$ axis as preimages; all points in the right-half-plane have two preimages; points in the left-half-plane have no preimages.

\end{enumerate}

\item $J_0$ is a simple line. 

\begin{enumerate}

\item $J_1$ is a point: $(x,xy)$.
$\det(DF(x,y))=x$, and $J_0$ is the line $\{x=0\}$; $J_1$ is the origin; the $y$ axis has no preimages except for the origin which has the whole $y$ axis as preimages; all other points have unique preimages.

\item  $J_1$ is a line: $(x^2,y)$. $\det(DF(x,y))=2x$, and both $J_0$ and $J_1$ are the $y$ axis.  The right-half-plane has two preimages (all points on the $y$-axis are fixed); the $y$ axis has unique preimages; the left-half-plane has no preimages.

\item $J_1$ is a parabola: $(x^2+y^2,y)$. $\det(DF(x,y))=2x$, and $J_0$ is the $y$ axis; $J_1$ is the parabola $\{x=y^2\}$.
The right of the $J_1$ parabola has two preimanges;
the parabola has unique preimages; 
the left of the parabola has no preimages.
\end{enumerate}
\item Examples with $J_0 = \mR^2$. In all cases,
$\det(DF(x,y))=0$. These cases are not included in Fig.~\ref{fig-surfs} since they are easily visualized by looking at their formulas.
\begin{enumerate}
\item $(x^2-y^2,0)$; $J_1$ is a line: the $x$ axis.
\item $(x^2,0)$; $J_1$ is a ray: the nonnegative $x$ axis.
\item $(x^2,x)$; $J_1$ is the parabola $\{x=y^2\}$.
\end{enumerate}
\end{enumerate}


\begin{figure}
\label{fig-surfs}
\begin{tabular}{|c|c|c|}

\hline
\includegraphics[width=2in]{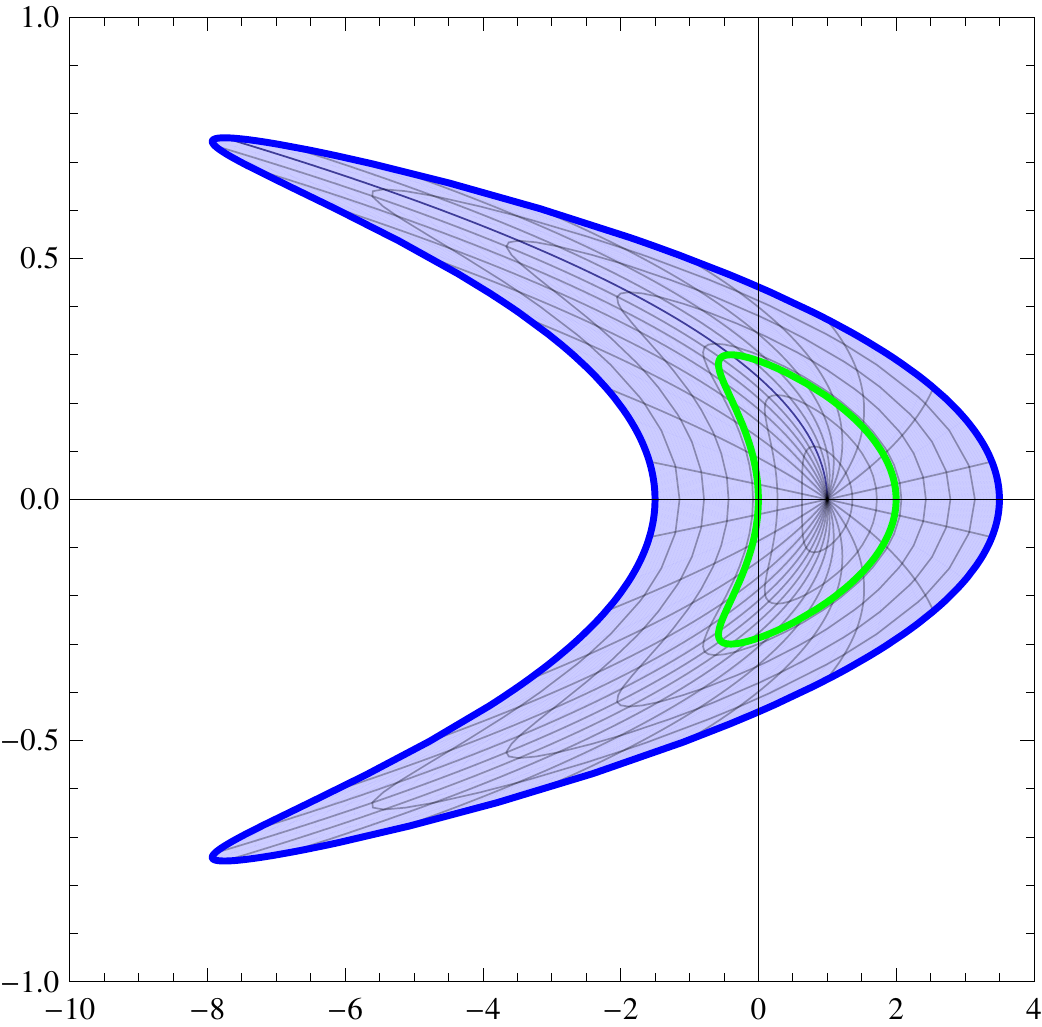}
&\includegraphics[width=2in]{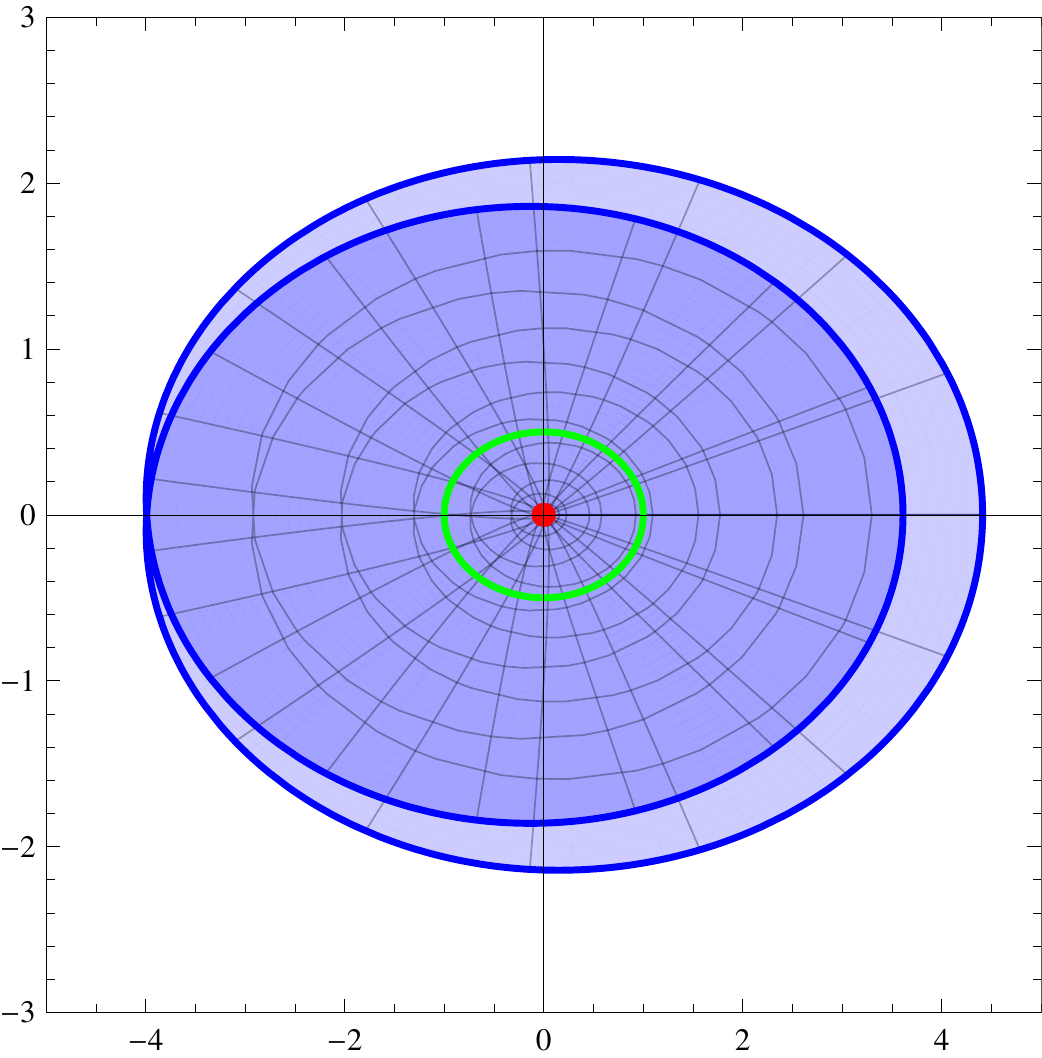}
&\includegraphics[width=1.8in]{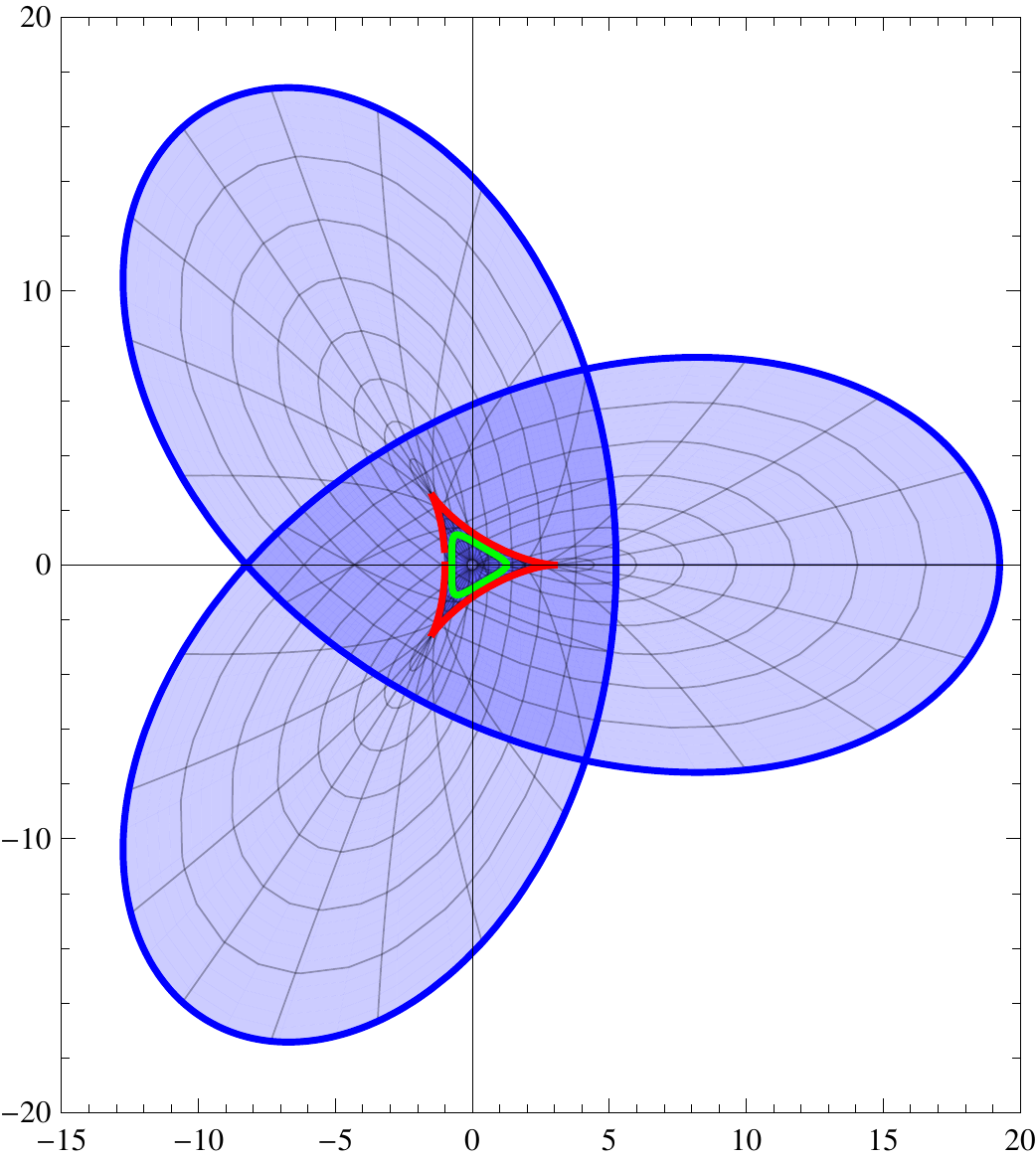}\\
1. $(1-1.4x^2+y, .3x)$&2. $(x^2-y^2, xy)$&3. $(x^2-y^2+2x, 2xy-2y)$\\

\hline
\includegraphics[width=1.8in]{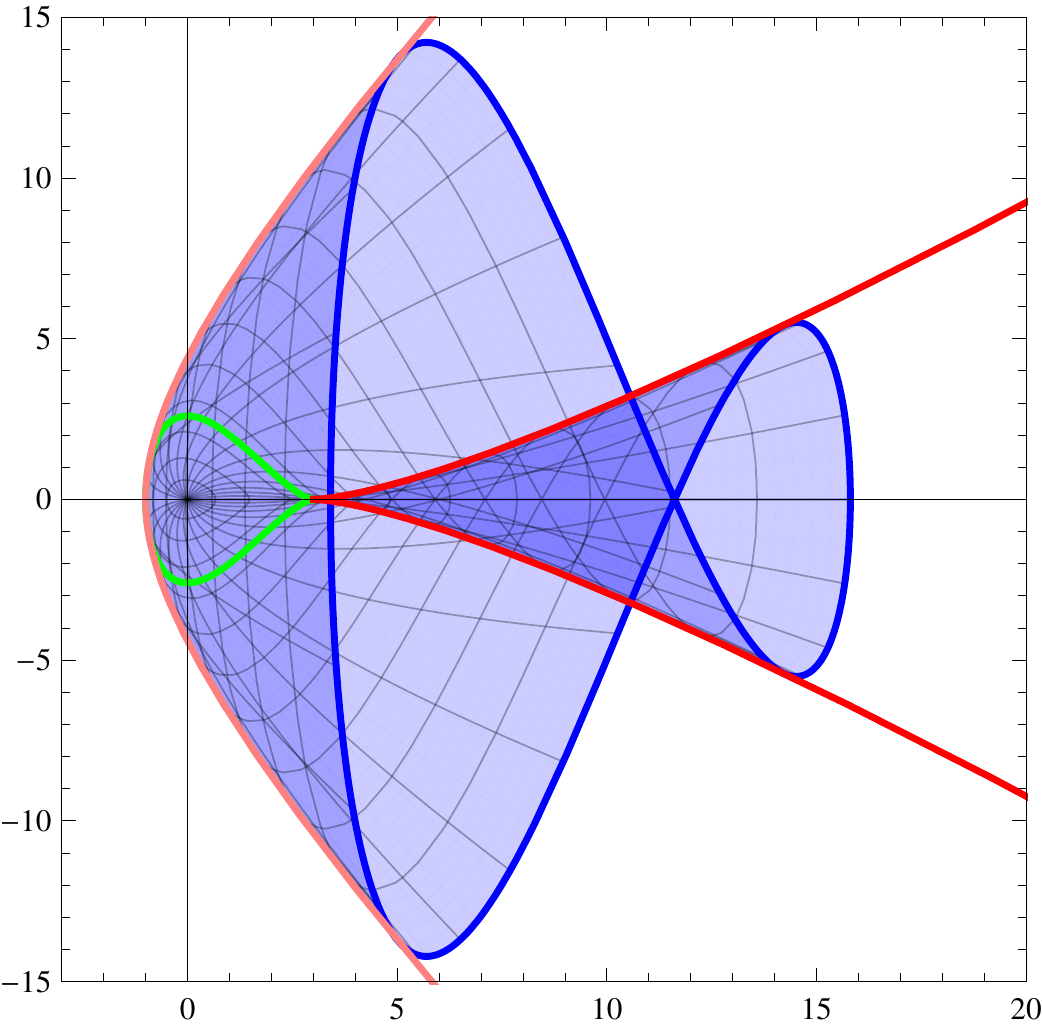}
&\includegraphics[width=1.8in]{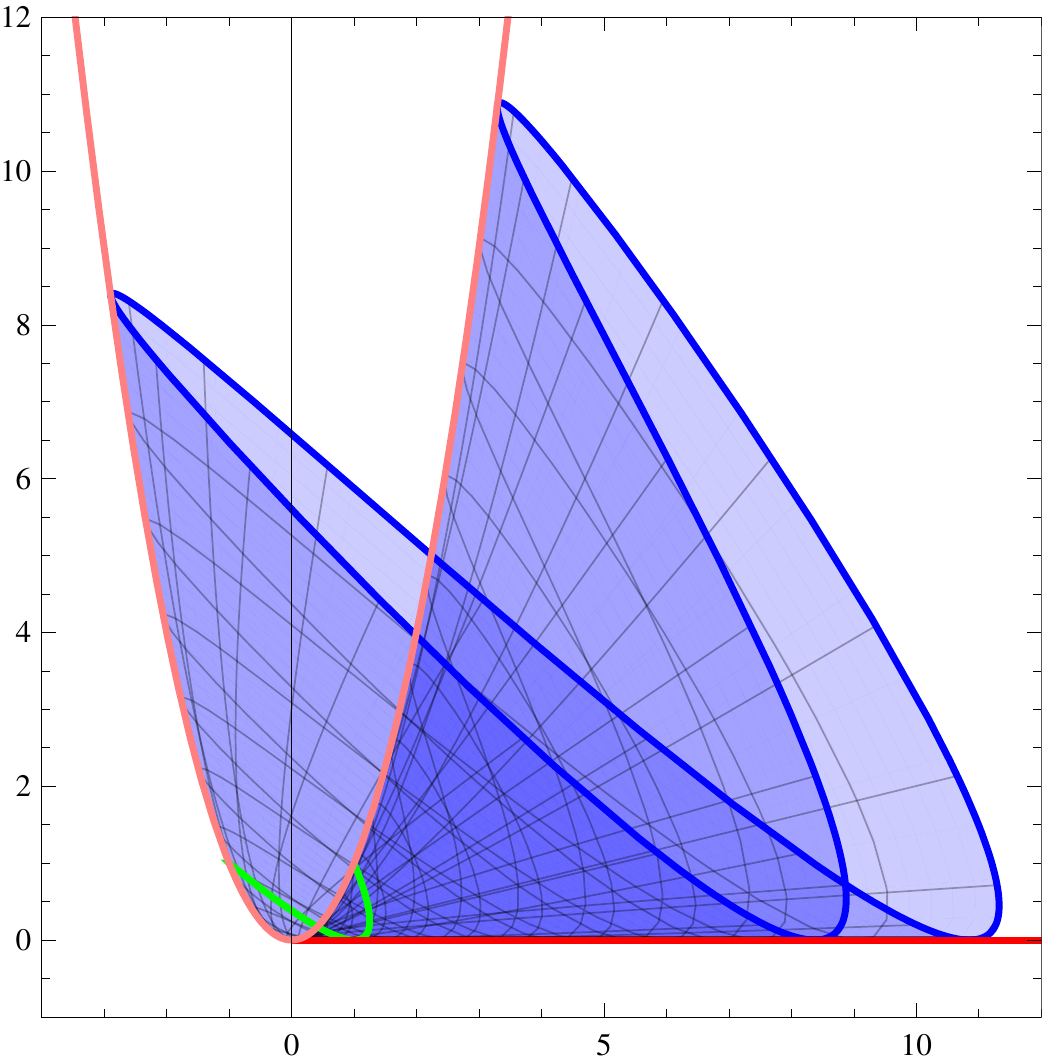}
&\includegraphics[width=1.9in]{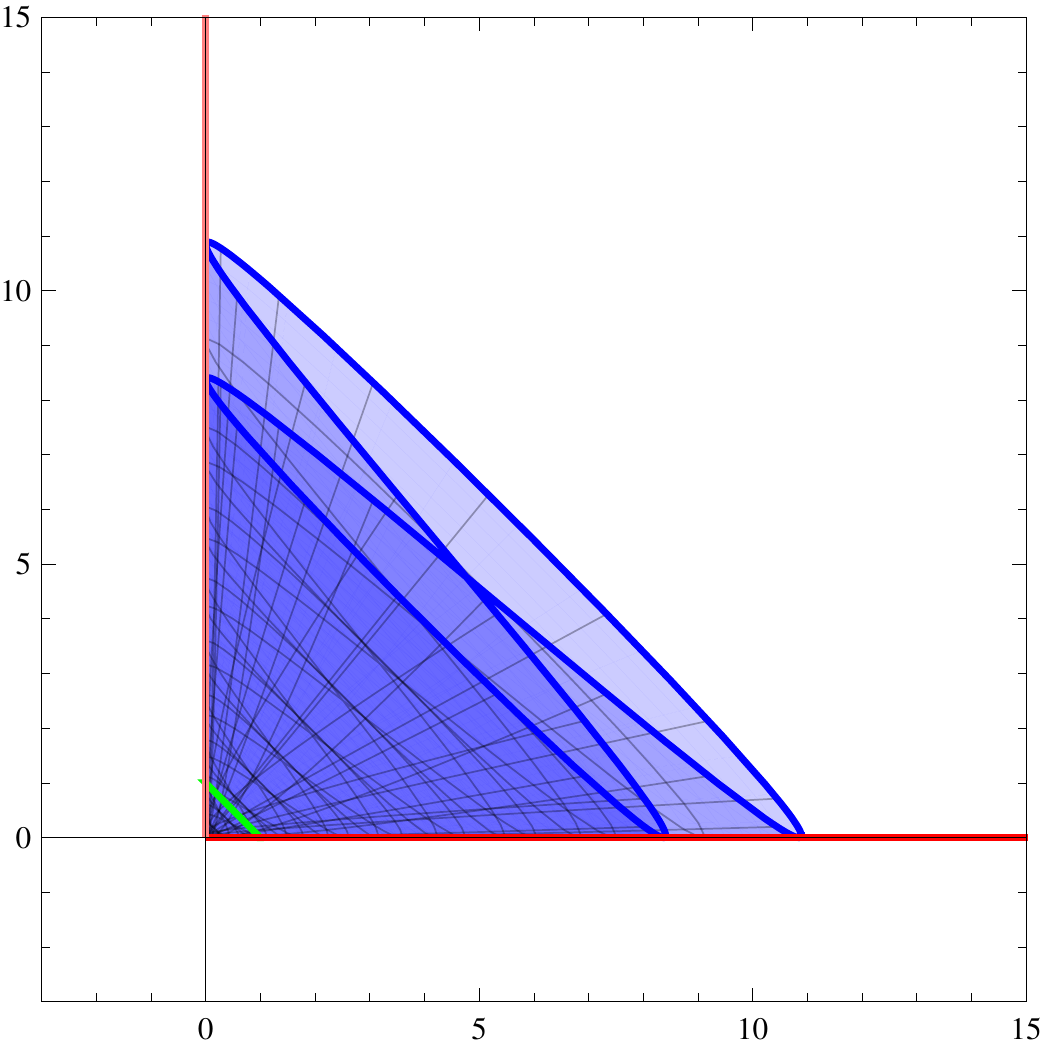}\\
4. $(x^2+y^2+2x, 2xy-2y)$&5a. $(x^2+y, y^2)$&5b. $(x^2, y^2)$\\
\hline
\includegraphics[width=1.6in]{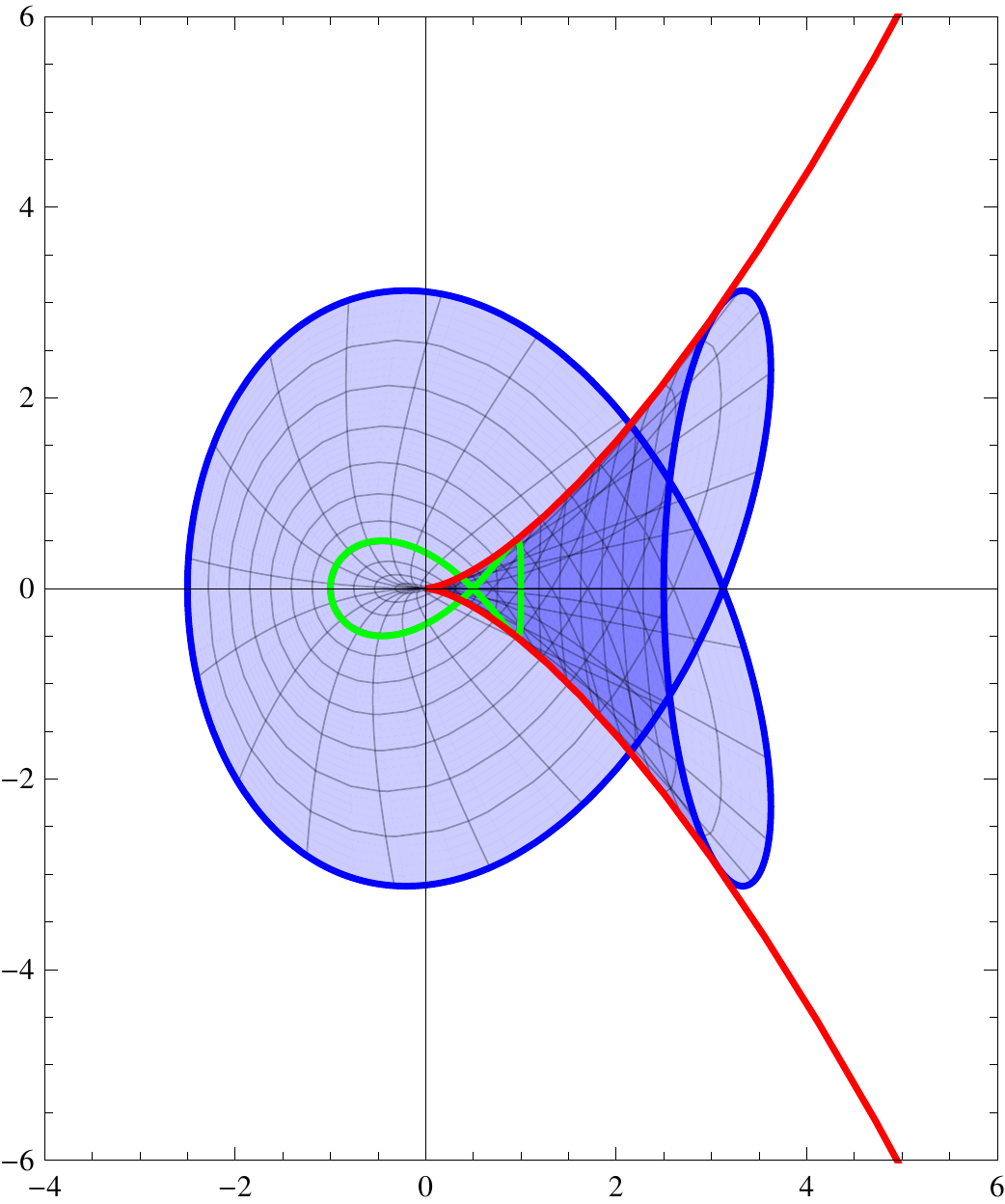}
&\includegraphics[width=1.4in]{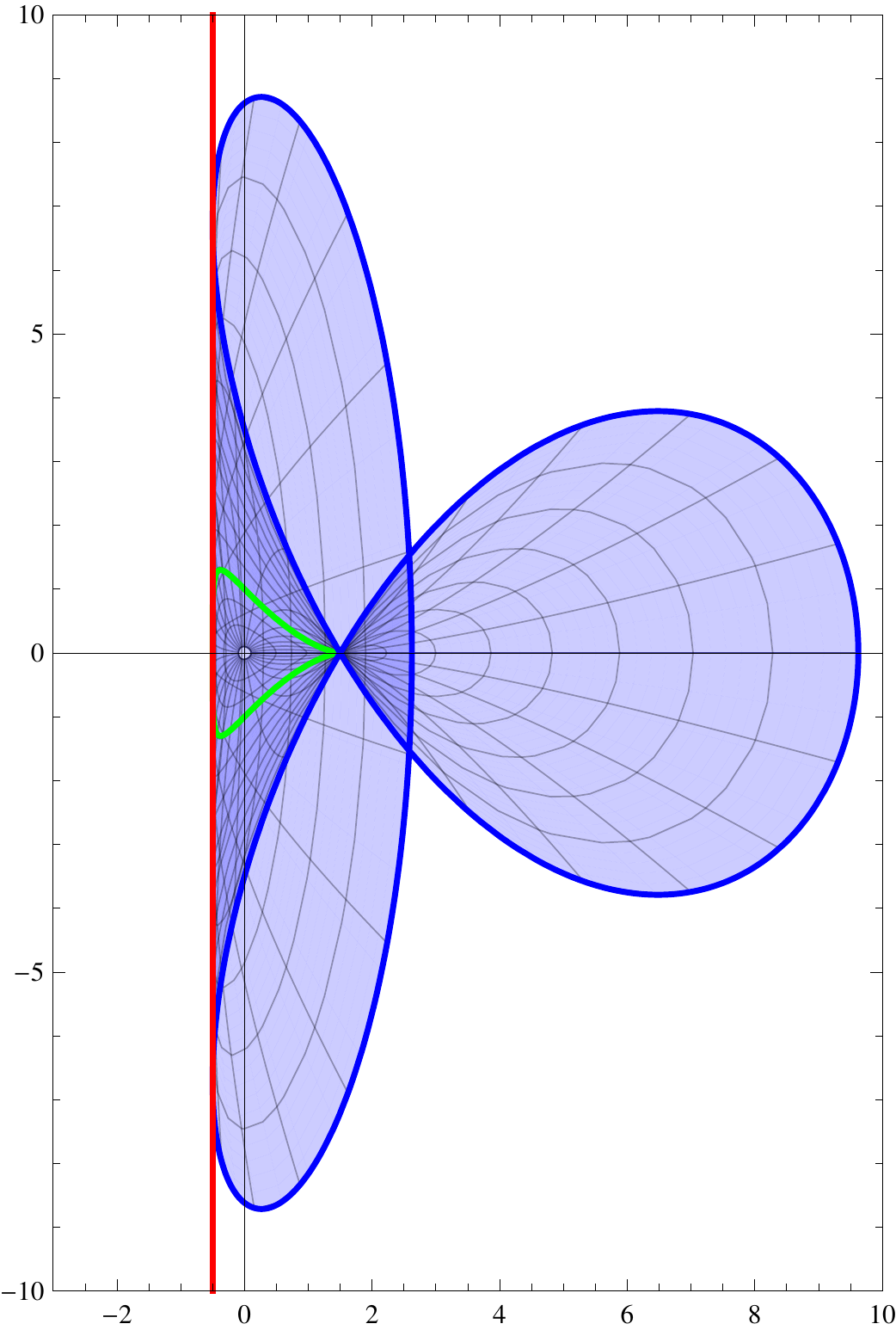}
&\includegraphics[width=1.9in]{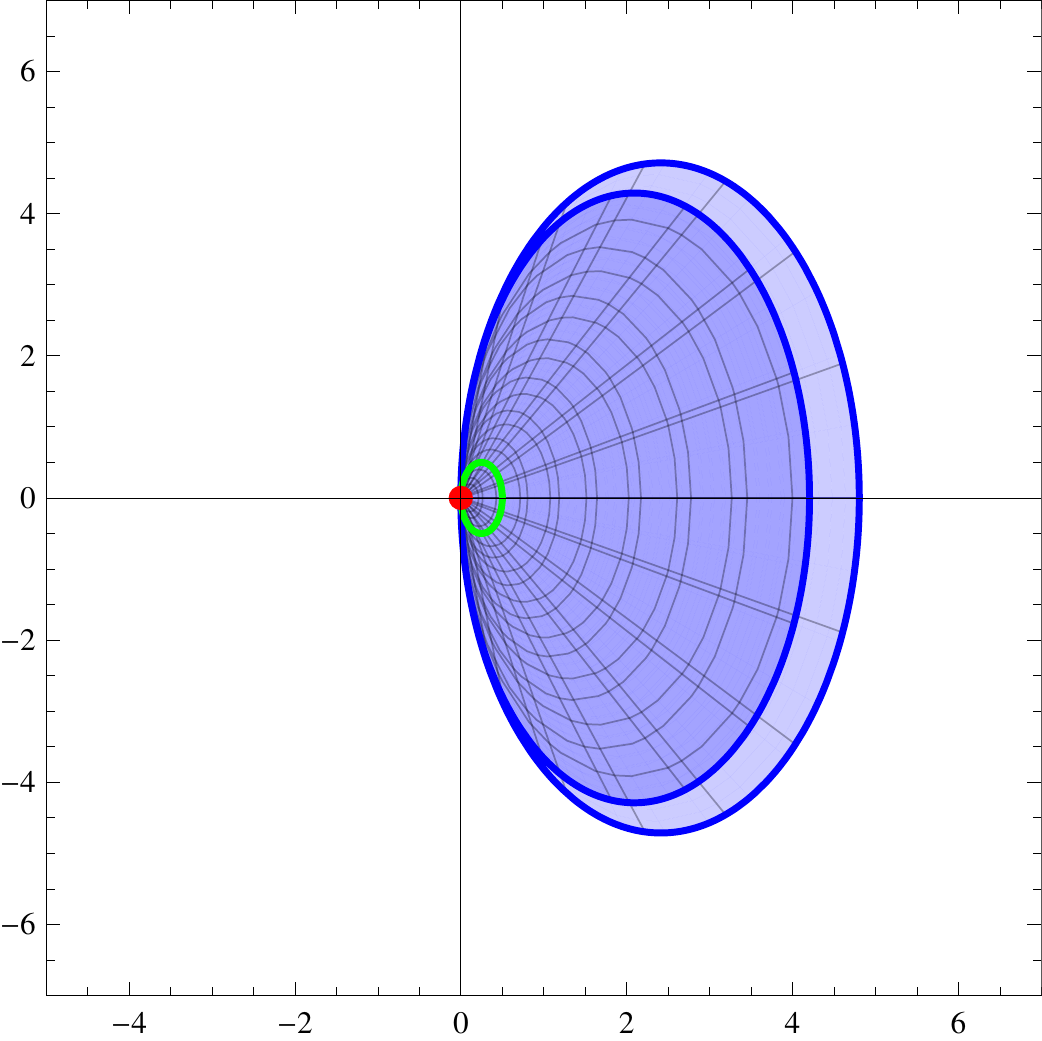}\\
6. $(\frac{1}{2}x^2+y, xy)$&7a. $(\frac{1}{2}x^2+x, xy-y)$&7b. ($\frac{1}{2}x^2,xy)$\\
\hline
\includegraphics[width=1.5in]{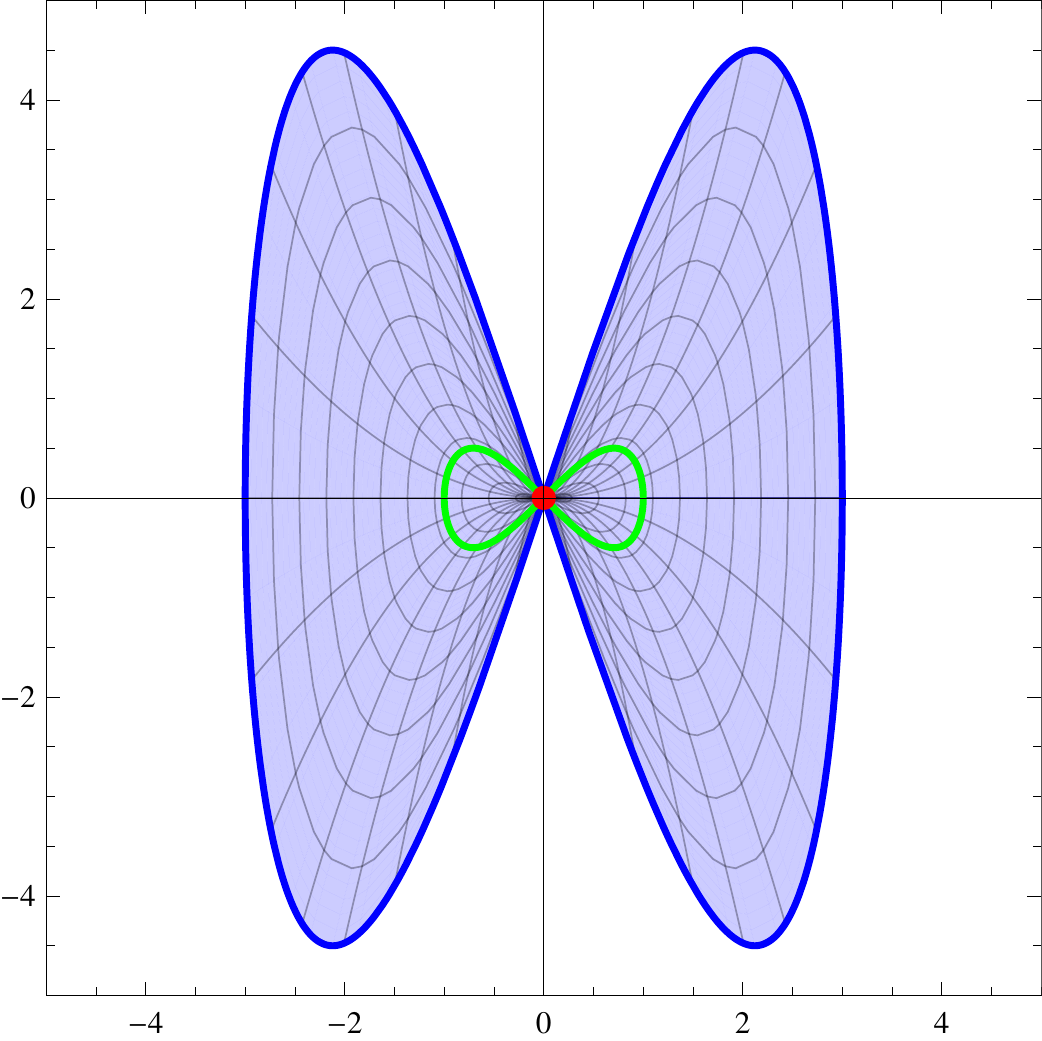}
&\includegraphics[width=2in]{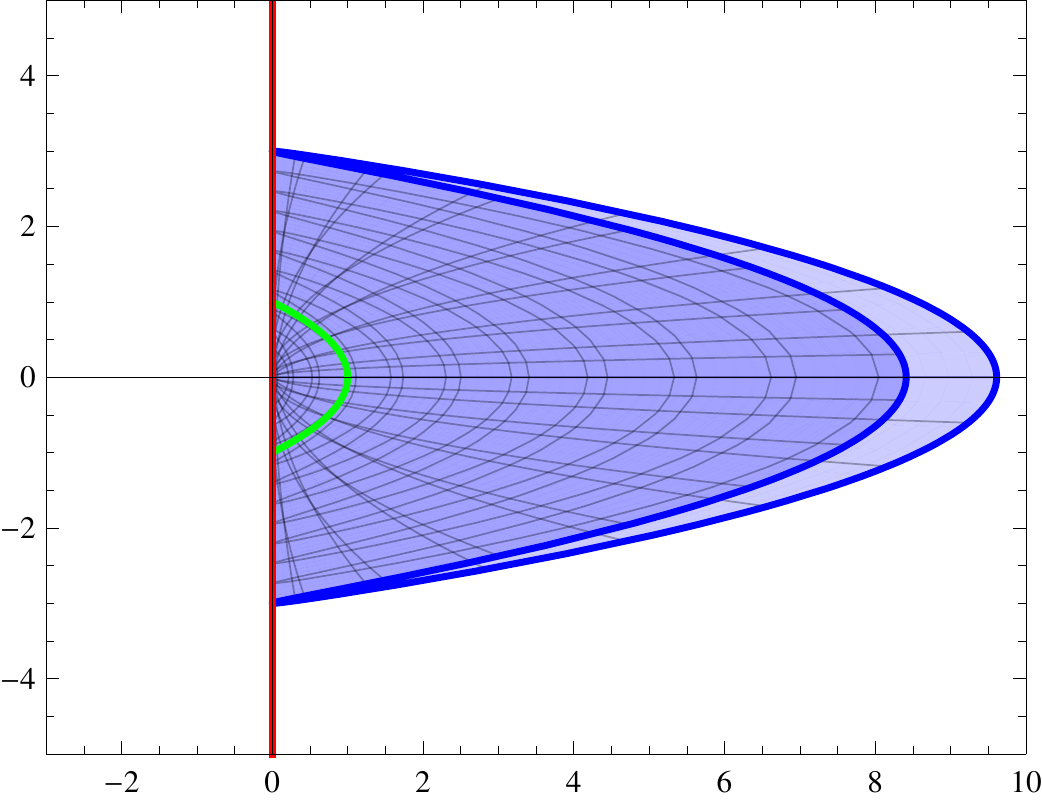}
&\includegraphics[width=1.9in]{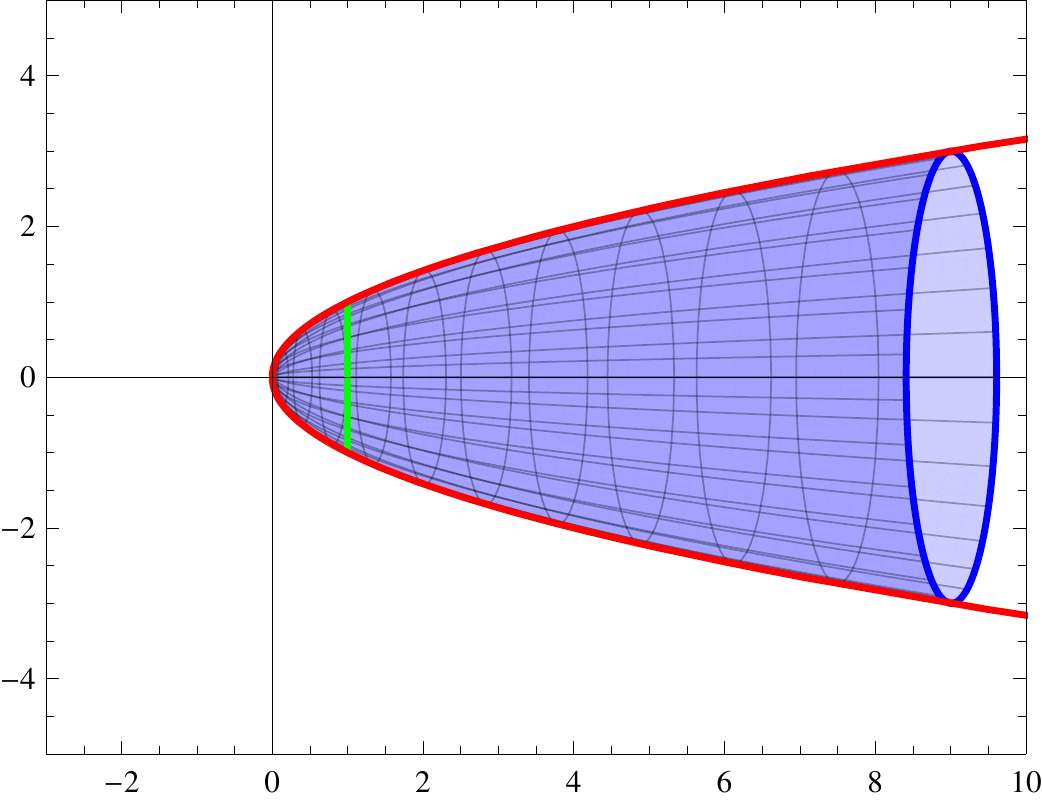}\\
8a. $(x,xy)$&8b. $(x^2,y)$&8c. $(x^2+y^2,y)$\\
\hline
\end{tabular}
\caption{Images of disks illustrating Theorem~\ref{th-J0J1}.
Disks have radius r, and center $(x_0, 0)$.  Nonzero centers were chosen to prevent plotting curves with coincident projection.
$(r,x_0)=$  1. $(2.5, 0)$, 2. $(2, 0.1)$, 3. $(3.5, 0)$, 4. $(3.1, 0)$, 5a.  $(3.1, 0.2)$, 5b. $(3.1, 0.2)$, 6. $(2.5, 0)$, 7a. $(3.5, 0)$, 7b.  $(3, 0.1)$, 8a. $(3, 0)$, 8b. $(3, 0.1)$, 8c. $(3, 0.1)$.
The image of the unit circle is in green; $J_1$ is in red. }
\end{figure}

\section{Proofs}
\label{s:proofs}

Before we begin the proof of Theorem \ref{th-J0J1}, we establish some useful (standard) lemmas and identities.

\begin{lemma}
\label{lemma:equivalence}
Let $h$ and $k$ be two diffeomorphisms of $\mR^2$, and $F:\mR^2 \rightarrow \mR^2$ a smooth two-dimensional map.  Define the map $G:\mR^2 \rightarrow \mR^2$ by $G=k \circ F \circ h^{-1}$.
Then $J_0^G=h(J_0^F)$ and $J_1^G=k(J_1^F)$.
$F$ and $G$ are said to be {\sc map equivalent}; map equivalent functions are called {\sc geometrically equivalent} in \cite{DGRRV}, or just {\sc equivalent} in singularity theory \cite{GG}.
\end{lemma}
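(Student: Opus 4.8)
The plan is to reduce the statement to the chain rule together with the observation that the Jacobian matrix of a diffeomorphism is invertible at every point. First I would note that, since $F$ is smooth and $h,k$ are diffeomorphisms, $G=k\circ F\circ h^{-1}$ is smooth, so $J_0^G$ is well defined. Then, fixing an arbitrary point $p\in\mR^2$ and writing $q=h^{-1}(p)$, the chain rule gives
\[
DG(p) = Dk\bigl(F(q)\bigr)\cdot DF(q)\cdot Dh^{-1}(p).
\]
Because $h$ and $k$ are diffeomorphisms, $Dh^{-1}(p)=\bigl(Dh(q)\bigr)^{-1}$ and $Dk(F(q))$ are invertible $2\times 2$ matrices, so both have nonzero determinant. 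Taking determinants and using multiplicativity, $\det\bigl(DG(p)\bigr)=\det\bigl(Dk(F(q))\bigr)\cdot\det\bigl(DF(q)\bigr)\cdot\det\bigl(Dh^{-1}(p)\bigr)$, and hence $\det\bigl(DG(p)\bigr)=0$ if and only if $\det\bigl(DF(q)\bigr)=0$.

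Next I would translate this pointwise equivalence into the desired set equality: $p\in J_0^G$ iff $\det\bigl(DF(h^{-1}(p))\bigr)=0$ iff $h^{-1}(p)\in J_0^F$ iff $p\in h(J_0^F)$, which is exactly $J_0^G=h(J_0^F)$.

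Finally, for the image statement I would compute directly, using the first part and the definition of the critical image as the image of the critical set under the map:
\[
J_1^G = G\bigl(J_0^G\bigr) = (k\circ F\circ h^{-1})\bigl(h(J_0^F)\bigr) = k\bigl(F(J_0^F)\bigr) = k\bigl(J_1^F\bigr),
\]
where the penultimate equality uses $h^{-1}\circ h=\mathrm{id}$. The assertion that $F$ and $G$ are map equivalent is then just the definition being introduced, so it requires nothing further.

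There is no genuine obstacle here; the one point that deserves care is the appeal to the inverse function theorem (or simply the definition of a diffeomorphism) to guarantee that the two flanking Jacobians $Dk$ and $Dh^{-1}$ are invertible, since this is precisely what allows multiplication by them to preserve, rather than create or destroy, zeros of the determinant. Everything else is bookkeeping with the chain rule and with images under bijections.
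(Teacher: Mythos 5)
Your proof is correct and follows essentially the same route as the paper's: the chain rule applied to the conjugacy, multiplicativity of the determinant, and invertibility of the flanking Jacobians $Dk$ and $Dh^{-1}$ to get the pointwise equivalence, followed by the same one-line computation for $J_1^G$. The only cosmetic difference is that you differentiate $G=k\circ F\circ h^{-1}$ directly, whereas the paper differentiates the equivalent identity $k\circ F=G\circ h$.
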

\begin{proof}
Differentiate $k\circ F(x,y) = G \circ h(x,y)$ using the chain rule: 
\[Dk(F(x,y)) DF(x,y)=DG(h(x,y) Dh(x,y).\]
Use the fact that $h$ and $k$ are diffeomorphisms to get $\det(DF(x,y))=0$ iff $\det(DG(h(x,y))=0$.
That is, $h(J^F_0) = J_0^G$.
For the image result, $J_1^G =G(J_0^G)=G(h(J^F_0))=k(F(J_0^F))=k(J_1^F)$.
\end{proof}

\begin{corollary}
\label{cor:stdconic}
When $h$ and $k$ are both affine diffeomorphisms of $\mR^2$ (so $F$ and $G$ are {\sc affinely map equivalent}),
and $J_0^F$ (resp. $J_1^F$) is one of the following: ellipse, hyperbola, parabola, line, ray, point, then
$J_0^G$ (resp. $J_1^G$) has the same geometric description.
\end{corollary}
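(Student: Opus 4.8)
The plan is to prove Corollary~\ref{cor:stdconic} as an immediate consequence of Lemma~\ref{lemma:equivalence} together with the elementary fact that affine diffeomorphisms of $\mR^2$ preserve the geometric type of each conic section and of each degenerate conic on the list. By Lemma~\ref{lemma:equivalence}, when $h$ and $k$ are affine diffeomorphisms we have $J_0^G = h(J_0^F)$ and $J_1^G = k(J_1^F)$; so it suffices to check that if $S \subset \mR^2$ is an ellipse, hyperbola, parabola, line, ray, or point, and $\phi$ is any invertible affine map, then $\phi(S)$ has the same description. Since the roles of $J_0$ and $J_1$ are symmetric here and $h$, $k$ are arbitrary affine diffeomorphisms, a single statement about $\phi(S)$ covers both claims.

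First I would record that an affine diffeomorphism $\phi(\mathbf{x}) = M\mathbf{x} + \mathbf{v}$ with $M$ invertible carries the zero set of a quadratic polynomial $q(\mathbf{x}) = \mathbf{x}^T Q \mathbf{x} + \mathbf{\ell}^T\mathbf{x} + c$ to the zero set of $q \circ \phi^{-1}$, which is again a (real) polynomial of degree at most two; moreover the matrix $M$ being invertible means the degree-two part is genuinely preserved in the sense that the new homogeneous quadratic form is $\tilde{Q} = (M^{-1})^T Q M^{-1}$, which is congruent to $Q$ and hence has the same rank and the same signature (up to overall sign). The classification of real conics recalled in the Background section is governed precisely by the invariants $\mathcal{D} = B^2 - 4AC$ (equivalently $\det Q$ up to a factor), the sign data $C\Delta$, and the rank of the $3\times 3$ matrix in \eqref{eq-Delta}; I would note that all of these are invariant (up to a nonzero scalar that does not affect the sign pattern) under the congruence induced by $\phi$, because the $3\times 3$ symmetric matrix transforms by conjugation with the block matrix $\begin{pmatrix} M & \mathbf{v} \\ 0 & 1 \end{pmatrix}^{-1}$, which has nonzero determinant. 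Hence ellipse maps to ellipse, hyperbola to hyperbola, parabola to parabola, point (degenerate ellipse) to point, and a single line to a single line. For the non-algebraic-variety case of a ray: a ray is $\{p + t\mathbf{d} : t \ge 0\}$, and $\phi$ sends it to $\{\phi(p) + t(M\mathbf{d}) : t \ge 0\}$ with $M\mathbf{d} \ne 0$, which is again a ray. This exhausts the list.

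I do not expect a genuine obstacle here; the content is entirely that affine maps act on conics by a congruence of the associated symmetric matrices and congruence preserves the rank/signature data that defines the type. The one point requiring a word of care is that a ray is not an algebraic set, so it must be handled by the direct parametric argument rather than by the conic-invariant bookkeeping; similarly one should note explicitly that the degenerate ellipse ``point'' case is covered because $\Delta = 0$ is preserved. I would therefore write the proof as: invoke Lemma~\ref{lemma:equivalence} to reduce to the statement about $\phi(S)$; dispatch the ray case by the parametrization; and for the conic cases observe that $\phi$ induces a congruence on the defining $3\times 3$ symmetric matrix, so $\mathcal{D}$, $\operatorname{rank}$, and the sign of $C\Delta$ are preserved up to a nonzero common factor, which is exactly the data in the Background classification that pins down the geometric type.
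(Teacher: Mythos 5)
Your proof is correct and takes essentially the same route as the paper: the paper's entire proof is the one-line assertion that nonsingular affine transformations preserve each geometric object on the list (implicitly using Lemma~\ref{lemma:equivalence} to reduce to that fact). You simply supply the details the paper omits --- the congruence argument for the conic invariants and the separate parametric treatment of the ray --- which is a sound, if more verbose, elaboration of the same idea.
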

\begin{proof}
Nonsingular affine transformations of the plane preserve all of the geometric objects listed in the Corollary.
\end{proof}

Since it is straightforward to construct an affine diffeomorphism taking any conic section to one in a standard form, this corollary allows us to assume a convenient form for $J_0$ in the proof of Theorem \ref{th-J0J1} below.

\begin{lemma}
\label{lemma:identities}
Consider the cross product determinants 
$X_{ij}=a_ib_j-a_jb_i$ following equation (\ref{eq-jacdet}).  Then the following identities are easily verified for any set of $i,j,k,l$:
\begin{align}
X_{ij}&=-X_{ji}\tag{X.ij}\\
a_k X_{ij}+a_i X_{jk}+a_j X_{ki} &=0\tag{aX.kij}\\
b_k X_{ij}+b_i X_{jk}+b_j X_{ki} &=0\tag{bX.kij}\\
X_{ij}X_{kl} - X_{ik}X_{jl} + X_{il}X_{jk} &=0\tag{XX.ijkl}.
\end{align}

\end{lemma}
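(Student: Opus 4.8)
The plan is to verify the four identities in Lemma \ref{lemma:identities} by direct substitution of the defining formula $X_{ij}=a_ib_j-a_jb_i$, since each is a polynomial identity in the entries $a_m,b_m$. For (X.ij), one simply observes $X_{ji}=a_jb_i-a_ib_j=-(a_ib_j-a_jb_i)=-X_{ij}$; this is immediate and also shows $X_{ii}=0$, which I would note for use below. For (aX.kij), substitute to get
\[
a_k(a_ib_j-a_jb_i)+a_i(a_jb_k-a_kb_j)+a_j(a_kb_i-a_ib_k),
\]
expand the six terms, and observe they cancel in pairs: $a_ka_ib_j$ against $-a_ia_kb_j$, $-a_ka_jb_i$ against $a_ja_kb_i$, and $a_ia_jb_k$ against $-a_ja_ib_k$. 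The identity (bX.kij) is the same computation with every $a$ replaced by $b$ — in fact one can simply invoke the symmetry of the construction (swapping the roles of the $a$'s and $b$'s sends $X_{ij}\mapsto -X_{ij}$, so (aX.kij) becomes $-$(bX.kij), which is equivalent). For (XX.ijkl), substitute the three products
\[
X_{ij}X_{kl}=(a_ib_j-a_jb_i)(a_kb_l-a_lb_k),\quad
X_{ik}X_{jl}=(a_ib_k-a_kb_i)(a_jb_l-a_lb_j),\quad
X_{il}X_{jk}=(a_ib_l-a_lb_i)(a_jb_k-a_kb_j),
\]
expand each into four monomials of the form $\pm a_{\alpha}a_{\beta}b_{\gamma}b_{\delta}$, and check that the twelve resulting terms cancel. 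I would organize this cancellation by noting that $X_{ij}X_{kl}-X_{ik}X_{jl}+X_{il}X_{jk}$ is the $2\times2$ minor expansion — it is exactly the determinant of the $3\times3$ matrix with two identical rows $(a_i\ a_j)$-type structure, equivalently the Plücker relation — but for the paper it suffices to say the expansion is routine and the terms cancel pairwise.

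There is essentially no obstacle here: all four statements are elementary polynomial identities, and the lemma itself asserts they are "easily verified." The only thing requiring minor care is bookkeeping in (XX.ijkl), where one must not mislabel signs among the twelve monomials; I would present it as the classical Plücker / Grassmann--Plücker relation, noting that $(a_i,b_i),\dots,(a_l,b_l)$ are four vectors in $\mathbb{R}^2$ and any such quadratic relation among their $2\times2$ minors is forced since $\mathbb{R}^2$ has dimension two. Concretely, (XX.ijkl) is the expansion along the first column of
\[
\begin{vmatrix} a_i & a_j & a_k & a_l \\ b_i & b_j & b_k & b_l \\ a_i & a_j & a_k & a_l \\ b_i & b_j & b_k & b_l \end{vmatrix}=0,
\]
or more precisely it records that the Plücker coordinates of a rank-$\le 2$ configuration satisfy the single quadratic relation; but since the statement only needs verification and not interpretation, I would keep the proof to the one-line-per-identity substitution above and leave the structural remark as an aside. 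Hence the proof is short: state (X.ij) by inspection, do the three-term cancellation for (aX.kij), transfer to (bX.kij) by the $a\leftrightarrow b$ symmetry, and do the twelve-term cancellation for (XX.ijkl).
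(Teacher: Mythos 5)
Your proof is correct and takes essentially the same route as the paper: the paper likewise verifies each identity as an elementary polynomial/determinant identity, writing (aX.kij) and (bX.kij) as $3\times 3$ determinants with a repeated column and (XX.ijkl) via the vanishing $4\times 4$ determinant with repeated rows/columns that you mention as an aside. One small caveat on that aside: (XX.ijkl) comes from the Laplace expansion of that determinant along its first \emph{two} rows (sums of products of complementary $2\times 2$ minors), not from a cofactor expansion along a single column; but your primary argument by direct twelve-term cancellation is complete as it stands.
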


\begin{proof}

All equations are easily verified once it is noticed that they can be written in terms of determinants.
Eq.~(X.ij) can be written as
$\begin{vmatrix}
a_i &b_i\\ a_j &b_j
\end{vmatrix}
=-\begin{vmatrix}
a_j &b_j\\ a_i &b_i
\end{vmatrix}$.
Eq.~(aX.kij) can be expressed as 
$\begin{vmatrix}
a_k &a_k &b_k\\ a_i &a_i &b_i \\ a_j &a_j &b_j\\
\end{vmatrix}
=0$.
Eq.~(bX.kij) can be similarly verified.
When the left-hand-side of eq.~(XX.ijkl) is multiplied out in terms of $a_i$'s and $b_j$'s, the twelve terms each appear twice in the twenty-four terms in the following determinant, which is clearly zero:
$\begin{vmatrix}
a_i &b_i &a_i &b_i\\ a_j &b_j &a_j &b_j \\
a_k &b_k &a_k &b_k\\ a_l &b_l &a_l &b_l
\end{vmatrix}
=0$.
\end{proof}
\noindent
We will refer to these equations for various combinations of indices in the proofs that follow below.

\begin{lemma}
\label{lemma:quadimage}
Consider the plane curve $C$ parametrized by $(x(t), y(t))=(\alpha t^2+\beta t, \gamma t^2+\delta t)$ where $t \in \mR$.
Then $C$ is a 
\begin{enumerate}
\item point if $\alpha=\beta=\gamma=\delta=0$
\item line if 
$\alpha=\gamma=0$ but at least one of $\beta$ and $\delta$ is nonzero.
\item a ray if $\Gamma \equiv \alpha \delta - \beta \gamma=0$ but at least one of $\alpha$ and $\gamma$ is nonzero.
\item a nondegenerate parabola if $\Gamma  \ne 0$.
\end{enumerate}
\end{lemma}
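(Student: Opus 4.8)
The plan is to rewrite the parametrization in vector form and then split into cases according to the rank of a single $2\times 2$ matrix. Put $\mathbf{v}=(\alpha,\gamma)$ and $\mathbf{w}=(\beta,\delta)$, so that $(x(t),y(t))=t^2\mathbf{v}+t\mathbf{w}$ and hence $C=\{\,t^2\mathbf{v}+t\mathbf{w} : t\in\mR\,\}$; note that $\Gamma=\alpha\delta-\beta\gamma$ is precisely $\det[\,\mathbf{v}\mid\mathbf{w}\,]$, the determinant of the matrix with columns $\mathbf{v}$ and $\mathbf{w}$. With this notation the four hypotheses become, respectively: (1) $\mathbf{v}=\mathbf{w}=0$; (2) $\mathbf{v}=0$ and $\mathbf{w}\neq 0$; (3) $\mathbf{v}\neq 0$ but $\mathbf{v}$ and $\mathbf{w}$ are linearly dependent; (4) $\mathbf{v}$ and $\mathbf{w}$ are linearly independent. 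I would first observe that these four alternatives are mutually exclusive and exhaust all possibilities, so it suffices to treat them one by one.

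Cases (1) and (2) are immediate: in (1) $C=\{(0,0)\}$, and in (2) $C=\{\,t\mathbf{w} : t\in\mR\,\}$ is the line through the origin spanned by the nonzero vector $\mathbf{w}$. In case (3), since $\mathbf{v}\neq 0$ I can write $\mathbf{w}=\lambda\mathbf{v}$ for a unique scalar $\lambda$, so $t^2\mathbf{v}+t\mathbf{w}=(t^2+\lambda t)\mathbf{v}$; as $t$ runs over $\mR$ the scalar $t^2+\lambda t$ runs over the half-line $[-\lambda^2/4,\infty)$, whence $C=\{\,s\mathbf{v} : s\ge -\lambda^2/4\,\}$ is the ray with endpoint $-\tfrac{\lambda^2}{4}\mathbf{v}$ pointing in the direction $\mathbf{v}$. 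In case (4) the linear map $L(p,q)=p\mathbf{v}+q\mathbf{w}$ is invertible because its matrix has determinant $\Gamma\neq 0$, and $C=L(P)$ where $P=\{(t^2,t):t\in\mR\}=\{(p,q):p=q^2\}$ is the standard nondegenerate parabola; since an invertible linear map carries a nondegenerate parabola to a nondegenerate parabola --- the geometric fact that underlies Corollary~\ref{cor:stdconic} --- $C$ is a nondegenerate parabola.

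The only step that is more than bookkeeping is case (4), and in particular the claim that one obtains the \emph{entire} parabola (not just an arc) and that it is nondegenerate rather than a pair of lines. The change-of-coordinates argument above settles all of this at once, since $L$ is a bijection of the plane; this is why I prefer it. As a fallback I would eliminate $t$ directly: the relations $\delta x-\beta y=\Gamma t^2$ and $\gamma x-\alpha y=-\Gamma t$ give $(\gamma x-\alpha y)^2=\Gamma(\delta x-\beta y)$, a conic whose quadratic part is a perfect square (so $\mathcal{D}=B^2-4AC=0$) and whose associated matrix has $\Delta=-\Gamma^4/4\neq 0$; moreover those same two relations, whose coefficient matrix has determinant $-\Gamma\neq 0$, show that $t\mapsto(x(t),y(t))$ is a bijection onto this conic. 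In that route the computation of $\Delta$ is the one place to be careful; everything else is routine.
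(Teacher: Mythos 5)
Your proof is correct. Cases (1)--(3) coincide with the paper's argument (the paper factors the ray case as $(\alpha t^2+\beta t)(1,\gamma/\alpha)$ after assuming $\alpha\neq 0$; your $\mathbf{w}=\lambda\mathbf{v}$ formulation is the same computation written coordinate-free, with the endpoint $-\tfrac{\lambda^2}{4}\mathbf{v}$ made explicit). In case (4) you take a genuinely different route: the paper eliminates $t$ by hand, substitutes back into $x(t)$, and then verifies $B^2-4AC=0$ and computes $\Delta=-\alpha^3/(4\Gamma^2)\neq 0$ directly from equations (\ref{eq-jacdet}) and (\ref{eq-Delta}), treating the subcases $\alpha=0$ and $\gamma=0$ separately; you instead observe that $C=L(P)$ where $L$ is the invertible linear map with matrix $[\,\mathbf{v}\mid\mathbf{w}\,]$ (determinant $\Gamma$) and $P=\{p=q^2\}$ is the standard parabola, and then invoke the affine invariance of nondegenerate parabolas --- precisely the fact already used in Corollary~\ref{cor:stdconic}. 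Your approach buys three things at once with no case analysis: the image is the \emph{entire} parabola (since $L$ is a bijection), it is nondegenerate (affine maps preserve nondegeneracy), and the parametrization is injective; the paper's computation must establish the first and second of these by explicit determinant calculations. Your fallback elimination $(\gamma x-\alpha y)^2=\Gamma(\delta x-\beta y)$ is essentially the paper's method in a more symmetric form, and your value $\Delta=-\Gamma^4/4$ is correct for that equation (it differs from the paper's $-\alpha^3/(4\Gamma^2)$ only because the two arguments normalize the conic equation differently). No gaps.
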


\begin{proof}
\begin{enumerate}
\item The image is the origin for all $t$.
\item $C$ is parametrized by $t(\beta, \delta)$.
\item Assume without loss of generality $\alpha \ne 0$. Then $(\alpha t^2+\beta t, \gamma t^2+\delta t)=(\alpha t^2+\beta t)(1, \gamma/\alpha)$, which is clearly a ray with endpoint corresponding to the value of $t$ which makes $\alpha t^2+\beta t$ a minimum.
Except for the endpoint of the ray, each point on the ray has two preimage values of $t$.
\item If $\alpha=0$ then $\gamma \ne 0$ and $\beta \ne 0$ and $t$ can be eliminated to give $y=\frac{\gamma}{\beta^2}x^2 + \frac{\delta}{\beta}x$, which is clearly a nondegenerate parabola.  Similarly $\gamma=0$ leads to a nondegenerate parabola, but with axis parallel to the $x$ axis.
If both $\alpha$ and $\gamma$ are nonzero, compute $\gamma x(t)-\alpha y(t)$ to eliminate the $t^2$ terms and solve for $t$ to get $t=\frac{\alpha y - \gamma x}{\Gamma}$.  
Substitute this into the formula for $x(t)$ to get a quadratic in $x$ and $y$.  Computing the coefficients of the quadratic terms and showing `$B^2-4AC=0$' since $B^2=4AC=4\frac{\alpha^4\gamma^2}{\Gamma^2}$ (recall eq.~(\ref{eq-jacdet})) verifies that $(x(t),y(t))$ is a parabola.
The quantity $\Delta$ can be computed to be $-\frac{\alpha^3}{4\Gamma^2} \ne 0$ (recall eq.~(\ref{eq-Delta})), which shows that the parabola is nondegenerate.
\end{enumerate}
\end{proof}

\begin{lemma}
\label{lemma:cusp}
Consider the smooth plane curve $C$ parametrized by
$\boldsymbol\alpha(t) =$
$ (\alpha_1(t), \alpha_2(t))$.
If $\boldsymbol\alpha'(t_0)=(0,0)$, and $\gamma(t_0) \equiv \alpha_1''(t_0)\alpha_2'''(t_0) - \alpha_1'''(t_0)\alpha_2''(t_0) \ne 0$, then $C$ has a cusp at $\boldsymbol\alpha(t_0)$ with order of tangency equal to $3/2$.

\end{lemma}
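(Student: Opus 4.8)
The plan is to normalize the curve near $t_0$ by a Taylor expansion followed by an affine change of coordinates, and then read the cusp structure directly off the normal form. First I would set $s=t-t_0$ and write the Taylor expansion $\boldsymbol\alpha(t_0+s)=\boldsymbol\alpha(t_0)+\tfrac12 s^2\,\mathbf p+\tfrac16 s^3\,\mathbf q+R(s)$, where $\mathbf p=\boldsymbol\alpha''(t_0)$, $\mathbf q=\boldsymbol\alpha'''(t_0)$, and $R(s)=o(s^3)$; the absence of a first-order term is exactly the hypothesis $\boldsymbol\alpha'(t_0)=(0,0)$. The decisive observation is that $\gamma(t_0)=\det\!\begin{pmatrix}\alpha_1''(t_0)&\alpha_2''(t_0)\\ \alpha_1'''(t_0)&\alpha_2'''(t_0)\end{pmatrix}=\det(\mathbf p,\mathbf q)$, so $\gamma(t_0)\neq0$ says precisely that $\mathbf p$ and $\mathbf q$ are linearly independent; in particular $\mathbf p\neq(0,0)$ and $\mathbf q\neq(0,0)$. (This is also what excludes the non-cusp degeneracies such as $t\mapsto(t^3,t^3)$, a smooth line, or $t\mapsto(t^2,t^4)$, a smooth parabola, where $\boldsymbol\alpha'$ vanishes but $\gamma$ does not.) Since all the curves to which we apply this lemma are images of polynomial maps, I will take $\boldsymbol\alpha$ to be $C^\infty$ (so $R(s)=O(s^4)$); a remark can note that $C^3$ suffices with $o$-remainders throughout.

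Next I would apply the affine diffeomorphism $L$ of $\mR^2$ sending $\boldsymbol\alpha(t_0)$ to the origin and the basis $\{\mathbf p,\mathbf q\}$ to $\{2e_1,6e_2\}$; this is well defined and invertible by linear independence. An invertible affine map carries tangent lines to tangent lines and preserves the order of contact of a branch with its tangent line, hence carries a cusp of tangency order $3/2$ to one of tangency order $3/2$; so it suffices to prove the claim for $\widetilde{\boldsymbol\alpha}(s):=L(\boldsymbol\alpha(t_0+s))$. By construction $\widetilde{\boldsymbol\alpha}(s)=\bigl(s^2+O(s^4),\,s^3+O(s^4)\bigr)$ — crucially the $s^3$ term feeds only the second coordinate, since $L\mathbf q$ is a multiple of $e_2$.

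Then I would clear the remaining low-order error in the first coordinate by a reparametrization. Writing $x(s)=s^2+O(s^4)=s^2 g(s)$ with $g$ smooth and $g(0)=1$, the function $\sigma(s):=s\sqrt{g(s)}$ is smooth near $s=0$ with $\sigma'(0)=1$, hence a local diffeomorphism fixing $0$, and $x(s)=\sigma(s)^2$. Reparametrizing by $\sigma$, the curve becomes $\bigl(\sigma^2,\ \sigma^3+w(\sigma)\bigr)$ with $w(\sigma)=O(\sigma^4)$. From this normal form everything is immediate. The velocity $\bigl(2\sigma,\ 3\sigma^2+w'(\sigma)\bigr)$ vanishes only at $\sigma=0$ and reverses there (leading term $2\sigma\,e_1$), so $\sigma=0$ is a genuine non-smooth point of the curve $C$; both half-branches $\{\sigma>0\}$ and $\{\sigma<0\}$ lie in $\{x\ge 0\}$, one with $y>0$ and one with $y<0$ near the origin, and both are tangent there to the common line $\{y=0\}$, with the tangent vector flipping from $(-1,0)$ to $(1,0)$ as $\sigma$ passes through $0$ — this is a cusp in the sense of the Introduction. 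Finally $x=\sigma^2$ and $y=\sigma^3(1+O(\sigma))$ give $|y|=|x|^{3/2}(1+o(1))$, so each branch meets its tangent line with order of contact $3/2$, matching the model $t\mapsto(at^2,bt^3)$; this is the order-$3/2$ assertion.

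\textbf{Main obstacle.} The individual computations are routine, so the real work is bookkeeping: making precise the reparametrization $s\mapsto\sigma(s)$ and the control of the remainder (clean for $C^\infty$ or $C^4$ curves, and needing a little extra care for merely $C^3$ ones), and checking that the affine normalization is legitimate, i.e. that ``cusp with order of tangency $3/2$'' is an affine-invariant notion — which holds because an invertible affine map sends a branch's tangent line to the tangent line of the image and preserves the order of vanishing of the transverse coordinate as a function of the tangential one.
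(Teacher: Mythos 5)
Your proposal is correct and follows essentially the same route as the paper's proof: translate to the origin, Taylor expand, and apply a linear change of coordinates sending $\boldsymbol\alpha''(t_0)$ and $\boldsymbol\alpha'''(t_0)$ to the coordinate axes so that the curve becomes the standard cusp $(s^2,s^3)$ plus higher-order terms; your version is in fact slightly more complete, since the explicit reparametrization $s\mapsto\sigma(s)$ pins down the order-$3/2$ contact exactly, where the paper simply reads it off the leading terms (and you avoid the paper's case split on whether $\alpha_2''(t_0)=0$). One small slip in your aside: for the degenerate examples $t\mapsto(t^3,t^3)$ and $t\mapsto(t^2,t^4)$ the quantity $\gamma$ \emph{does} vanish --- that is precisely why the hypothesis excludes them --- so the phrase ``where $\boldsymbol\alpha'$ vanishes but $\gamma$ does not'' should read ``where $\boldsymbol\alpha'$ and $\gamma$ both vanish.''
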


\begin{proof}
Translate $\boldsymbol\alpha(t_0)$ to the origin in the plane and expand around $t=t_0$. After replacing $t-t_0$ with $t$ and $\boldsymbol\alpha(t)-\boldsymbol\alpha(t_0)$ with $\boldsymbol\alpha(t)$, we get
$\boldsymbol\alpha(t)=(\frac{\alpha_1''(t_0)}{2!}t^2 + \frac{\alpha_1'''(t_0)t^3}{3!} + O(t^4),  \frac{\alpha_2''(t_0)t^2}{2!}+\frac{\alpha_2'''(t_0)}{3!}t^3 + O(t^4))$.
\begin{itemize}
\item Case 1: $\alpha_2''(t_0)=0$. Then the lowest order terms in $t$ of $(\alpha_1(t), \alpha_2(t))$: $(\frac{\alpha_1(t_0)}{2!} t^2, \frac{\alpha_2(t_0)}{3!} t^3)$ give the parametric version of the standard cusp of order $3/2$ \cite{Arnold, GG}.
Note that $\gamma(t_0) \ne 0$ implies that both $\alpha_1''(t_0)$ and $\alpha_2'''(t_0)$ are nonzero.
\item Case 2: $\alpha_2''(t_0) \ne 0$.  
We can put the curve into the form of Case 1 by multiplying $\boldsymbol\alpha(t)$ by a (nonsingular) matrix:

$\left[
\begin{smallmatrix}
\alpha_1''(t_0) & \alpha_2''(t_0) \\
-\alpha_2''(t_0) & \alpha_1''(t_0)
\end{smallmatrix}
\right]
\left[
\begin{smallmatrix}
\alpha_1(t)\\
\alpha_2(t)
\end{smallmatrix}
\right] =
\left[
\begin{smallmatrix}
(\frac{(\alpha_1''(t_0))^2}{2!} + \frac{(\alpha_2''(t_0))^2)}{2!} t^2 + O(t^3)\\ \frac{\gamma(t_0)}{3!} t^3 + O(t^4))
\end{smallmatrix}
\right]$.

The requirements that $\alpha_2''(t_0) \ne 0$ and $\gamma(t_0) \ne 0$ guarantee that the leading coefficient in each of the two components is nonzero.
Note that multiplication by the matrix is a rescaling
by the determinant of the matrix composed with a rotation by the negative of the angle determined by
the vector of the quadratic coefficients: $(\alpha_1''(t_0), \alpha_2''(t_0))$.
The sides of the cusp are tangent to this vector.
After multiplication, the sides of the cusp are tangent to the positive $x$ axis, as in Case 1.
\end{itemize}

\end{proof}

\subsection{Proof of Theorem \ref{th-J0J1}}

We now proceed to the proof of our main result, Theorem {\ref{th-J0J1}.
The starting point for most cases is to use Cor. \ref{cor:stdconic} to allow us to use a convenient form for each case.
That is, using the notation of Lemma~\ref{lemma:equivalence}, we
replace $F$ with $k\circ F \circ h^{-1}$ where the affine diffeomorphism $h$ is selected to map the singular set for $F$ to a standard form, and $k$ is a rescaling of $x$ and/or $y$ to eliminate any nonzero constant factor of $\det(DF(x,y))$).
For example, in case 3 below, we choose $h$ so that it maps the singular set which is assumed to be an ellipse, to the unit circle $x^2+y^2=1$.
The choice of $k$ allows us to assume that 
$\det(DF(x,y))$ is exactly $x^2+y^2-1$.
This allows us to assign specific values to the six coefficients of $\det(DF(x,y))$ according to eq.~(\ref{eq-jacdet}).
We then use algebraic manipulation, along with our lemmas above, to establish the results.
For completeness, and because our proofs of the two generic cases differ from the proofs in \cite{DGRRV}, we include proofs for the two generic cases in the Appendix. 

\begin{proof}
\begin{enumerate}
\item The image of the empty set is empty.
\item The image of a point is a point.
\item See the Appendix.
\item See the Appendix.

\item $J_0$ is a pair of intersecting lines.
By Corollary \ref{cor:stdconic}, we can assume that $\det(DF(x,y))=xy$.
By equation (\ref{eq-jacdet}),
\begin{align}
2X_{01}&=0\label{eq:ilcoeff0}\\
4X_{02}&=1\label{eq:ilcoeff1}\\
2X_{12}&=0\label{eq:ilcoeff2}\\
2X_{04}+X_{31}&=0\label{eq:ilcoeff3}\\
2X_{32}+X_{14}&=0\label{eq:ilcoeff4}\\
X_{34}&=0\label{eq:ilcoeff5}.
\end{align}

Parametrize $J_0$ by $\{(0,t)\} \bigcup \{(t,0)\}$.
Then $J_1$ is parametrized by
\linebreak
$\{\boldsymbol\alpha(t) \} \bigcup \{(\boldsymbol\beta(t) \}$
where $\boldsymbol\alpha(t) = 
(a_2 t^2 + a_4 t, b_2 t^2+ b_4 t)$
and 
$\boldsymbol\beta(t) = 
(a_0t^2 +a_3 t, b_0 t^2+ b_3 t)$.

Note that $a_2$ and $b_2$ cannot both be zero 
by (\ref{eq:ilcoeff1}).  Similarly, $a_0$ and $b_0$ cannot both be zero.
By Lemma \ref{lemma:quadimage}, if
$X_{24}=0$, then $\boldsymbol\alpha(t)$ is a ray, and if $X_{24} \ne 0$, then $\boldsymbol\alpha(t)$ is a parabola.
Similarly, if
$X_{03}=0$, then $\boldsymbol\beta(t)$ is a ray, and if $X_{03} \ne 0$, then $\boldsymbol\beta(t)$ is a parabola.
We will show that $X_{03}X_{24}=0$, implying that at least one of the branches of $J_1$ is a ray, and the other is either a ray or a parabola.
This follows since (\ref{eq:ilcoeff0}), (\ref{eq:ilcoeff1}), (\ref{eq:ilcoeff2}) and
(aX.201) imply that $a_1=0$.
Similarly, with (bX.201) we get $b_1=0$. 
Therefore, both $X_{14}$ and $X_{13}$ are zero, 
which forces $X_{04}$ and $X_{23}$ to be zero by 
(\ref{eq:ilcoeff3}) and (\ref{eq:ilcoeff4}).
Finally, (XX.0234) implies $X_{03}X_{24}=0$.

\item $J_0$ is a parabola.
By Corollary \ref{cor:stdconic}, we can assume that $\det(DF(x,y))=x^2-y$.
By equation (\ref{eq-jacdet}),
\begin{align}
2X_{01}&=1\label{eq:pcoeff0}\\
4X_{02}&=0\label{eq:pcoeff1}\\
2X_{12}&=0\label{eq:pcoeff2}\\
2X_{04}+X_{31}&=0\label{eq:pcoeff3}\\
2X_{32}+X_{14}&=-1\label{eq:pcoeff4}\\
X_{34}&=0\label{eq:pcoeff5}.
\end{align}

Parametrize $J_0$ by $\{(t, t^2) | t \in \mR \}$.
Then $J_1$ is parametrized by
\linebreak
$\{(\alpha_1(t), \alpha_2(t)) | t \in \mR \}$
where $(\alpha_1(t), \alpha_2(t)) = (a_0 t^2 + a_1 t^3 + a_2 t^4+a_3 t + a_4 t^2, 
b_0 t^2 + b_1 t^3 + b_2 t^4+b_3 t + b_4 t^2)$.
We will find solutions to  so
$(\alpha_1'(t), \alpha_2'(t))=(0,0)$ to find possible cusps.
That is, we solve 
\begin{align}
\alpha_1'(t)&=a_3+2a_0t+2a_4t+3a_1 t^2+4a_2 t^3=0\label{eq:pa1prime}\\
\alpha_2'(t)&=b_3+2b_0t+2b_4t+3b_1 t^2+4b_2 t^3=0\label{eq:pa2prime}
\end{align}

{\it Showing that there is only one possible singular point on $J_1$.}
First we observe that eq.~(aX.201)
with (\ref{eq:pcoeff0}, \ref{eq:pcoeff1}, \ref{eq:pcoeff2}) implies $a_2=0$; similarly using eq.~(bX.201) leads to $b_2=0$.  So $X_{2i}=0$ for all $i$.
Thus, the $t^3$ terms drop out.
Now we eliminate the $t^2$ terms by
$b_1$ (\ref{eq:pa1prime}) $-$  $a_1$ (\ref{eq:pa2prime}),
leaving us with $X_{31}+2X_{01}t+2X_{41}t=0$.
But eqs.~(\ref{eq:pcoeff0}, \ref{eq:pcoeff4}) and
$X_{2i}=0$ leaves us with $X_{31}+3t=0$, or
$t=\frac{X_{13}}{3}$.
Therefore $t=\frac{X_{13}}{3}\equiv T$ is the only possible solution.

{\it Showing $T$ satisfies $(\alpha_1'(T), \alpha_2'(T))=(0,0)$.}
Rearranging eq.~(\ref{eq:pa1prime}) yields 
$\alpha_1'(T)=\alpha_1'(\frac{X_{13}}{3})=
\frac{2}{3}[a_3(\frac{1}{2})+a_0 X_{13}+\frac{a_1}{2} X_{13}^2]+
\frac{2}{3}[a_3+a_4 X_{13}]$. We will show both quantities in square brackets are zero.
The first quantity is zero by (aX.301) after using (XX.0134) with $X_{34}=0$ and $X_{14}=-1$ to replace $X_{30}$ with $\frac{1}{2}X_{13}^2$.  The second quantity is zero by (aX.413).
Similar calculations show that $\alpha_2'(T)$ also equals zero.
Therefore there is exactly one singular point on $J_1$.

{\it Showing that the singular point on $\boldsymbol\alpha(t)$ is a nondegenerate cusp.}
By Lemma \ref{lemma:cusp}, we must show 
$\Gamma(T)\equiv \alpha_1''(T) \alpha_2'''(T) - \alpha_1'''(T) \alpha_2''(T) \ne 0$.
By differentiating eqs.~(\ref{eq:pa1prime}) and (\ref{eq:pa2prime}) this condition becomes

\begin{align}
\Gamma(T)&=
(2(a_0+a_4)+2 a_1 X_{31})(6b_1) - (2(b_0+b_4)+2 b_1 X_{31})(6a_1)\nonumber \\
&=12(X_{01}+X_{41}) = 12(\frac{1}{2} + 1)=18 \ne 0
\end{align}

\noindent
We note that example 6 in Fig.~\ref{fig-surfs}, $(x,y)\mapsto (x^2+y, xy)$, satisfies the stronger condition of being a cusp in the singularity sense as a map of the plane since it is locally {\it map equivalent} (recall Lemma~\ref{lemma:equivalence}) via the diffeomorphisms $h(x,y)=(y,x-y^2)$ ($h$ is the {\it near identity} transformation $(x,y)\mapsto (x-y^2, y)$ composed with $(x,y) \mapsto (y,x)$), and $k$ the identity to the normal form
$(x, xy-y^3)$ for the standard plane map cusp \cite{GG}.
By Golubitsky and Guilleman's analysis of the normal form, this map has a curve of singular points which has a plane curve cusp or order $3/2$ which passes through the origin.

\item $J_0$ is a pair of parallel lines.
\begin{enumerate}
\item If the lines are distinct, by Corollary \ref{cor:stdconic}, we can assume that
\linebreak
$\det(DF(x,y))=x^2-1$.
By equation (\ref{eq-jacdet}),
\begin{align}
2X_{01}&=1\label{eq:plcoeff0}\\
4X_{02}&=0\label{eq:plcoeff1}\\
2X_{12}&=0\label{eq:plcoeff2}\\
2X_{04}+X_{31}&=0\label{eq:plcoeff3}\\
2X_{32}+X_{14}&=0\label{eq:plcoeff4}\\
X_{34}&=-1\label{eq:plcoeff5}.
\end{align}

Parametrize $J_0$ by $\{(1,t)\} \bigcup \{(-1,t)\}$.
Then $J_1$ is parametrized by 
$\{\boldsymbol\alpha(t) \} \bigcup \{\boldsymbol\beta(t) \}$
where $\boldsymbol\alpha(t) = 
(a_0 + a_1 t + a_2 t^2+a_3 + a_4 t, 
b_0 + b_1 t + b_2 t^2+b_3 + b_4 t)$
and 
$\boldsymbol\beta(t) = 
(a_0 - a_1 t + a_2 t^2-a_3 + a_4 t, 
b_0 - b_1 t + b_2 t^2-b_3 + b_4 t)$.

We observe that eq.~(aX.201)
with (\ref{eq:plcoeff0}, \ref{eq:plcoeff1}, \ref{eq:plcoeff1}) implies $a_2=0$; similarly using eq.~(bX.201) leads to $b_2=0$, so 

\begin{align}
\boldsymbol\alpha(t) &= 
(a_0 + a_3 + (a_1+a_4) t, 
b_0 +b_3+ (b_1 +b_4)t)\\
\boldsymbol\beta(t) &= 
(a_0 -a_3 +(a_4- a_1) t, 
b_0 -b_3+(b_4- b_1) t).
\end{align}

We will show that either $a_1+a_4$ and $b_1+b_4$ are both zero and $a_4-a_1$ and $b_4-b_1$ are both nonzero, or 
$a_1+a_4$ and $b_1+b_4$ are both nonzero and $a_4-a_1$ and $b_4-b_1$ are both zero.
This will guarantee that $J_1$ is the union of a line and a point.

First note that $a_2=0$ and $b_2=0$ implies $X_{2i}=0$ for all $i$.
Thus, (\ref{eq:plcoeff4}) implies $X_{14}=0$.
By (\ref{eq:plcoeff3}) and (XX.0134),
$X_{13}^2=1$.  So $X_{13}= \pm 1$. 
When $X_{13}=1$, (aX.413) implies $a_1-a_4=0$
and (bX.413) implies $b_1-b_4=0$.
Note that $a_1+a_4$ and $b_1+b_4$ cannot both be zero without forcing $a_4$ and $b_4$ to both be zero, which would force $X_{34}$ to be zero, contradicting 
(\ref{eq:plcoeff5}).
Thus, $\boldsymbol\alpha(t)$ is a line and $\boldsymbol\beta(t)$ is a point.
Similarly, $X_{13}=-1$ implies
$\boldsymbol\alpha(t)$ is a point and $\boldsymbol\beta(t)$ is a line.

\item If the lines are coincident,  we can assume by Corollary \ref{cor:stdconic} that $J_0$ is the $y$ axis; $\det(DF(x,y))$ can be assumed to be $x^2$.
This turns out to be case 7b in the statement of the Theorem.
By equation (\ref{eq-jacdet}),
\begin{align}
2X_{01}&=1\label{eq:dlcoeff0}\\
4X_{02}&=0\label{eq:dlcoeff1}\\
2X_{12}&=0\label{eq:dlcoeff2}\\
2X_{04}+X_{31}&=0\label{eq:dlcoeff3}\\
2X_{32}+X_{14}&=0\label{eq:dlcoeff4}\\
X_{34}&=0\label{eq:dlcoeff5}.
\end{align}

First, (\ref{eq:dlcoeff0}, \ref{eq:dlcoeff1}, \ref{eq:dlcoeff2}) together with 
(aX.201) imply $a_2=0$, and together with (bX.201) imply $b_2=0$.
So $X_{2i}=0$ for all $i$.
(XX.0134) now reduces to $X_{04}X_{13}=0$, but 
together with (\ref{eq:dlcoeff3}) this forces
both $X_{04}$ and $X_{13}$ to equal zero.
Now, since $X_{04}$ and $X_{14}$ are both zero, and 
$X_{01}=1/2\ne 0$, (aX.401) implies $a_4=0$ and
(bX.401) implies $b_4=0$.
This forces $\boldsymbol\alpha(t)$ to be a point.
So when $J_0$ is a double line, $J_1$ can only be a point.
\end{enumerate}

\item $J_0$ is a simple line. We can assume by Corollary \ref{cor:stdconic} that $J_0$ is the $y$ axis; $\det(DF(x,y))$ can be assumed to be $x$.
This turns out to include cases 8a, 8b and 8c in the Theorem statement.
By equation (\ref{eq-jacdet}),
\begin{align}
2X_{01}&=0\label{eq:lcoeff0}\\
4X_{02}&=0\label{eq:lcoeff1}\\
2X_{12}&=0\label{eq:lcoeff2}\\
2X_{04}+X_{31}&=1\label{eq:lcoeff3}\\
2X_{32}+X_{14}&=0\label{eq:lcoeff4}\\
X_{34}&=0\label{eq:lcoeff5}.
\end{align}
Parametrize $J_0$ by $\{(0,t)\}$.
Then $J_1$ is parametrized by 
$\{ \boldsymbol\alpha(t) \}$
where $\boldsymbol\alpha(t) = 
(a_2 t^2 + a_4 t, b_2 t^2+ b_4 t)$.
By Lemma \ref{lemma:quadimage}, $J_1$ can
be a point, line, parabola, or ray.
Our examples 8a, 8b and 8c show that the first three are possible, so all we need
to do is exclude the case of a ray.
By comparing the parametrization of $J_1$ with
Lemma \ref{lemma:quadimage}, we see that the
quantity $\Gamma$ from the lemma is exactly $X_{24}$.
Thus, the condition for a ray is that 
$X_{24}=0$ where at least one of $a_2$ or $b_2$ is nonzero.  We will show that $X_{24}=0$ implies both
$a_2=0$ and $b_2=0$.  This will be done in three subitems: $a_4\ne 0$, $b_4 \ne 0$, and $a_4=0=b_4$.
\begin{itemize}
\item $a_4 \ne 0$.
First, (aX.423) implies $X_{23}=0$.
Then (\ref{eq:lcoeff4}) implies $X_{14}=0$.
Then (aX.413) implies $X_{13}=0$, after which
(aX.402) implies $a_2 X_{40}=0$. $X_{40}$ cannot equal zero because, along with $X_{13}=0$, 
it would contradict (\ref{eq:lcoeff3}).
Therefore $a_2=0$.
But now we would have $X_{24}=0$ and $a_2=0$ along
with $a_4 \ne 0$.  This forces $b_2=0$.
But $J_1$ cannot be a ray if both $a_2$ and $b_2$ are zero.
\item $b_4 \ne 0$.
This case leads to both $a_2$ and $b_2$ vanishing exactly in the previous case by interchanging $a$ and $b$.
\item $a_4=0=b_4$.
This forces $X_{4i}=0$ for all $i$.
Now (\ref{eq:lcoeff4}) implies $X_{23}=0$ and
(\ref{eq:lcoeff3}) implies $X_{31}=1$.
Now (aX.312) implies $a_2=0$, and (bX.312) implies $b_2=0$.
\end{itemize}
These three subitems show that a ray is impossible, so $J_1$ can only be a point, line, or parabola.

\item $J_0$ is all of $\mR^2$.
The proof we use in this case is completely different, since $J_0$ is not one-dimensional, and therefore has no curve parametrization to exploit.
On the other hand, $\det(DF(x,y))$ vanishes identically, so equation (\ref{eq-jacdet}) implies
\begin{align}
2X_{01}&=0\label{eq:r2coeff0}\\
4X_{02}&=0\label{eq:r2coeff1}\\
2X_{12}&=0\label{eq:r2coeff2}\\
2X_{04}+X_{31}&=0\label{eq:r2coeff3}\\
2X_{32}+X_{14}&=0\label{eq:r2coeff4}\\
X_{34}&=0\label{eq:r2coeff5}.
\end{align}
These six equations could be used as in the eight cases above to obtain the result, but our version is several pages long.  Instead, we quote a completed calculation from work in progress \cite{PKO}, 
where the general twelve-parameter family of eq.~(\ref{eq-genquad}) is equivalent, in the sense of Lemma~\ref{lemma:equivalence}, to the seven-parameter family
$(x^2+ a_2y^2+a_3x+a_4y, b_1 xy+ b_2y^2+b_3x+b_4y)$.
This can be done by choosing $h$ and $k$ in Lemma~\ref{lemma:equivalence} to make $(1,0)$ map to $(1,0)$, and to make sure $\max||F(\cos(\theta), \sin(\theta))||=1$.
($h$ is a rotation; $k$ is a rotation composed with a rescaling.)
This means we can assume $a_1=0$ and $b_0=0$.
Eqs.~(\ref{eq:r2coeff0}, \ref{eq:r2coeff1}, \ref{eq:r2coeff3}) then imply $b_1, b_2$ and $b_4$ vanish, and eq.~(\ref{eq:r2coeff4}) implies $a_2b_3=0$.
\begin{itemize}
\item If $b_3=0$, then $F(x,y)=(x^2+a_2 y^2 + a_3x+a_4y, 0)$. If $a_2<0$, $J_1$ is the image of the plane which covers the whole $x$-axis; if $a_2 \ge 0$, $J_1$ only covers a ray on the $x$-axis extending to $+\infty$. Together, $J_1$ is either a line or a ray.
\item If $b_3 \ne 0$, then $a_2=0$, and eq.~(\ref{eq:r2coeff5}) implies $a_4=0$.
So $F(x,y)=(x^2 + a_3x, b_3x)$, and $J_1$ is a parabola.
\end{itemize}

Thus the only cases that occur are $J_1$ as a line, ray or parabola.
\end{enumerate}
\end{proof}

Note: The proofs of the classification theorem above actually give stronger results than just the classification of the sets $J_0$ and $J_1$.
In addition, an inspection of the parametrizations and their images, illustrated in Fig.~\ref{fig-surfs}, gives pointwise information about the maps the from $J_0$ onto $J_1$.
Specifically, the maps restricted to $J_0$ onto $J_1$ are bijections in cases 1 (trivially), 2, 3, 4, 6, 8b and 8c. 
In 5a, one of the branches of the $J_0$ hyperbola maps bijectively to its image parabola; 
the other branch maps to a ray: the intersection of  the two lines in $J_0$ mapping to the endpoint of the ray, and the rest of the branch `folding' to map 2-1 to the rest of the ray.
In 5b both branches of the $J_0$ hyperbola map to the image rays as in case 5a.  The intersection of the lines in $J_0$ maps to the (common) vertex of the rays.
In case 7a, one line in $J_0$ maps bijectively to its image line; all points on the other line in $J_0$ map to the point in $J_1$.
In cases 7b and 8a the whole $J_0$ line maps to the $J_1$ point.
In case 9 of the Theorem (not pictured in Fig.~\ref{fig-surfs}), $J_0$ is the whole plane, and $J_1$ is either a line, parabola or ray.
Preimages of points in $J_1$ are either a single unbounded curve or a pair of unbounded curves.
In our case 9a example, the preimage of the origin is a pair of intersecting lines; all other points on the $J_1$ line have hyperbolas as preimages, with both branches of the hyperbola mapping to the same point.
In the case 9b example, the preimages of the $J_1$ ray vertex is the $x$-axis; the preimages of all other points on the ray are pairs of vertical lines of the form $x=\pm c$.
In example 9c, each point on the $J_1$ parabola has a single vertical line as its preimage.

\section{Discussion}
\label{sec:discussion}

One can view the classification of the singular set for quadratic maps as a map from the $12$-parameter quadratic map coefficient space (eq.~(\ref{eq-genquad})) to the $6$-dimensional coefficient space of the corresponding Jacobian determinant (eq.~(\ref{eq-jacdet})).
We note that this map is not onto.
In particular, there are conic sections corresponding to empty singular sets which are
not possible as Jacobian determinants of quadratic maps of the plane.
For example, $x^2+y^2+1$ cannot be realized as a Jacobian determinant of a quadratic map of the plane.
This can be seen by mimicking the proof of Case 3 in Theorem \ref{th-J0J1}, where we assumed $\det(DF(x,y))=x^2+y^2-1$.  Identities (a)-(e) remain
unchanged, but identity (f) becomes $X_{13}^2+X_{14}^2=-1$, which is impossible.
Similarly, $x^2+1$ cannot be realized, as can be seen by mimicking the proof of case 6 in Theorem \ref{th-J0J1}, where we assumed $\det(DF(x,y))=x^2-1$.
The implication that $X_{13}^2=1$ in case 6 becomes $X_{13}^2=-1$, which is also impossible.
An interesting consequence is that the only quadratic maps with empty critical set are those with constant nonzero Jacobian determinant, which include the Henon maps, as well as other maps which are homeomorphisms, but not conjugate to a Henon map \cite{NienNF}.
Work in progress \cite{PKO} treats this issue in more detail by addressing the geometry of the quadratic coefficient space with respect to the $J_0$-$J_1$ classification used in this paper; a consequence is identification of the codimension of each of the cases enumerated in Theorem \ref{th-J0J1}.
Other future work includes identifying normal forms, using topological equivalence rather than map equivalence, for each of the classes identified in the current paper and studying the dynamics of the corresponding families.
This would extend the program initiated in \cite{Nienthesis} for maps with critical sets which are either ellipses or points.

We note that generalizations of the approach in this paper could be applied to cubic maps of the plane, assuming only one component is cubic, while the other component is linear.  This also leads to conic sections for $J_0$, but new possibilities for $J_1$ beyond those enumerated in Theorem \ref{th-J0J1} for quadratic maps.  
See \cite{Nienthesis} for cubic examples and some generalizations to higher dimensional maps.

\section{Summary}
\label{sec:summary}

This paper is a complete classification of the critical sets and their images for quadratic maps of the plane.
Although our ultimate goal is a complete classification of the dynamics of quadratic maps,
the results in this paper are only a small step in that direction since we have studied a single iterate rather than the long term behavior under iteration.
Nonetheless, this global singularity theory approach provides a coarse classification of quadratic maps which we believe is a useful step on the way to understanding the full dynamical behavior of this family.

\section{Appendix}

This Appendix includes the proofs of the two generic cases of Theorem \ref{th-J0J1} where $J_0$ is (3) an ellipse, or (4) a hyperbola.  The proofs also follow from results in \cite{DGRRV}. Our proofs are included for completeness and because they differ from those in \cite{DGRRV}.

\begin{enumerate}
\setcounter{enumi}{2}
\item $J_0$ is an ellipse. As suggested in the statement just following Cor.~\ref{cor:stdconic}, we can assume
$\det(DF(x,y))=x^2+y^2-1$.

{\it Showing that $J_1$ is a closed curve with exactly three singular points.}
First parametrize $J_0$ by $\{(\cos(t), \sin(t))|t \in \mR \}$.
$J_0$ and $J_1$ are clearly a closed curves.
By eq.~(\ref{eq-genquad}), $J_1=F(J_0)$ is parametrized by
$\boldsymbol\alpha(t)=(\alpha_1(t), \alpha_2(t))=(a_0 \cos^2(t) + a_1 \cos(t)\sin(t) + a_2 \sin^2(t) + a_3 \cos(t) + a_4 \sin(t), 
b_0 \cos^2(t) + b_1 \cos(t)\sin(t) + b_2 \sin^2(t) + b_3 \cos(t) + b_4 \sin(t))$.

By the coefficients of $\det(DF(x,y))$ in equation (\ref{eq-jacdet}),
\begin{align}
2X_{01}&=1\label{eq:ecoeff0}\\
4X_{02}&=0\label{eq:ecoeff1}\\
2X_{12}&=1\label{eq:ecoeff2}\\
2X_{04}+X_{31}&=0\label{eq:ecoeff3}\\
2X_{32}+X_{14}&=0\label{eq:ecoeff4}\\
X_{34}&=-1\label{eq:ecoeff5}.
\end{align}

These six equations and the identities listed below lead to
\begin{enumerate}
\item $a_2=-a_0$, by (aX.201)
\item $a_3=-2(a_0X_{13}+a_1X_{30})$, by (aX.301)
\item $a_4=-2(a_0X_{14}+a_1X_{40})$, by (aX.401)
\item $X_{23}=-X_{03}=\frac{1}{2}X_{14}$, by (XX.0123) and (\ref{eq:ecoeff4})
\item $X_{24}=-X_{04}=\frac{1}{2}X_{31}$, by (XX.0124) and (\ref{eq:ecoeff5})
\item $X_{13}^2+X_{14}^2 = 1$, by (XX.0134)
\end{enumerate}

The last equation allows us to define $\phi$ by  
$X_{13}=-\cos(\phi), X_{14}=\sin(\phi)$.

Now we can rewrite $\alpha_1'(t)$ as

\begin{align}
\alpha_1'(t)&=(a_2- a_0) 2\cos(t)\sin(t) + a_1 \cos^2(t) -a_1 \sin^2(t) + 2 a_2 \sin(t)\cos(t)\nonumber\\
& \hskip .3in - a_3 \sin(t) + a_4 \cos(t) 
\rm{\ (by\ differentiation\ of\ } \boldsymbol\alpha(t))\nonumber \\
&=-2a_0 \sin(2t)+a_1\cos(2t)-a_3\sin(t)+a_4\cos(t)
\nonumber\\
&\hskip .3in \rm{\ (by\ double\ angle\ identities\ and\ identity\ (a))}\nonumber\\
&=-2a_0\sin(2t)+a_1\cos(2t)-(-2)(a_0X_{13}+a_1X_{30})\sin(t)\nonumber\\
&\hskip .3in +(-2)(a_0X_{14}+a_1X_{40})\cos(t)
\nonumber\\
&\hskip .3in \rm{\ (by\ identities\ (b)\ and\ (c))}\nonumber\\
&=-2a_0(\sin(2t)+\cos(\phi)\sin(t)+\sin(\phi)\cos(t))
\nonumber\\
&\hskip .3in +a_1(\cos(2t)+\sin(\phi)\sin(t)-\cos(\phi)\cos(t))\nonumber\\
&\hskip .3in \rm{\ (by\ identities\ (d)\ and\ (e)\ and\ definition\ of\ }\phi)\nonumber\\
&=-2a_0(\sin(2t)+\sin(t+\phi))+a_1(\cos(2t)-\cos(t+\phi))\nonumber\\
&\hskip .3in \rm{\ (by\ sum\ angle\ identities)}\nonumber\\
&=-2a_0(2\sin(\frac{3t+\phi}{2}) \cos(\frac{\phi-t}{2})) + a_1(2\sin(\frac{3t+\phi}{2}) \sin(\frac{\phi-t}{2}))\nonumber\\
&\hskip .3in ({\rm since\ }\sin(A+B)+\sin(A-B)=2\sin A\cos B \rm{\ and\ }\nonumber \\
&\hskip .3in \cos(A+B)-\cos(A-B)=-2\sin A\sin B) \nonumber\\
&=2\sin(\frac{3t+\phi}{2})(-2a_0 \cos(\frac{\phi-t}{2}) + a_1 \sin(\frac{\phi-t}{2}))
\label{eq:a1prime}
\end{align}

Similarly,
\begin{align}
\alpha_2'(t))&=2\sin(\frac{3t+\phi}{2})(-2b_0 \cos(\frac{\phi-t}{2}) + b_1 \sin(\frac{\phi-t}{2}))
\label{eq:a2prime}
\end{align}

Define $A1(t)$ and $A2(t)$ via eqs.~(\ref{eq:a1prime}) and (\ref{eq:a2prime}) so that 
\[(\alpha_1'(t), \alpha_2'(t))=2\sin(\frac{3t+\phi}{2})(A1(t), A2(t)).\]

It is clear that $(\alpha_1'(t), \alpha_2'(t))=(0,0)$ at the three $t$ values defined by $3t+\phi=0 \mod 2\pi$.
It turns out that these are the only solutions since $A1(t)$ and $A2(t)$ cannot simultaneously be zero for any $t$ value.
This can be seen because if there were a simultaneous zero,  
$b_0 A1(t) - a_0 A2(t) = X_{10} \sin(\frac{\phi-t}{2})=-\frac{1}{2} \sin(\frac{\phi-t}{2})$, and
$b_1 A1(t) - a_1 A2(t) = 2 X_{10} \cos(\frac{\phi-t}{2})=-\cos(\frac{\phi-t}{2})$ would simultaneously be zero, which is impossible.
Thus 
$(\alpha_1'(t), \alpha_2'(t))=(0,0)$ has exactly three solutions.

{\it Showing these three solutions are, in fact, nondegenerate cusps.}
By Lemma \ref{lemma:cusp}, this requires verifying the nondegeneracy condition
$\alpha_1''(t) \alpha_2'''(t) - \alpha_2''(t) \alpha_1'''(t) \ne 0$ at the three zeros.
Differentiating eqs.~(\ref{eq:a1prime}) and (\ref{eq:a2prime}) twice, this quantity can be shown to equal
$2X_{01}(7+2\cos(3t+\phi))$ which clearly cannot equal zero at any $t$, so does not equal zero at the $t$ values corresponding to the three cusps.

\item $J_0$ is a hyperbola.
By Corollary \ref{cor:stdconic}, we can assume that $J_0$ is the special hyperbola $xy=1$, and $\det(DF(x,y))=xy-1$.
By equation (\ref{eq-jacdet}),
\begin{align}
2X_{01}&=0\label{eq:hcoeff0}\\
4X_{02}&=1\label{eq:hcoeff1}\\
2X_{12}&=0\label{eq:hcoeff2}\\
2X_{04}+X_{31}&=0\label{eq:hcoeff3}\\
2X_{32}+X_{14}&=0\label{eq:hcoeff4}\\
X_{34}&=-1\label{eq:hcoeff5}.
\end{align}

Parametrize $J_0$ by $\{(t, 1/t) | t \ne 0 \}$.
Then $J_1$ is parametrized by 
$\{(\alpha_1(t), \alpha_2(t)) | t \ne 0 \}$
where $(\alpha_1(t), \alpha_2(t)) = (a_0 t^2 + a_1 + a_2 t^{-2}+a_3 t + a_4 t^{-1}, 
b_0 t^2 + b_1 + b_2 t^{-2}+b_3 t + b_4 t^{-1})$.
We will show there is exactly one solution to
$(\alpha_1'(t), \alpha_2'(t))=(0,0)$.
We will first show that there is only one possible solution, then that this is, in fact, a solution, and finally, that the solution is a nondegenerate cusp on $J_1$.

{\it Showing there is only one possible singular point on $J_1$.}
After differentiating $(\alpha_1(t), \alpha_2(t))$ and multiplying through by $t^3$, we see that any solution satisfies:
\begin{align}
A1(t) \equiv t^3\alpha_1'(t)=-2a_2-a_4t+a_3t^3+2a_0 t^4&=0\label{eq:ha1prime}\\
A2(t) \equiv t^3\alpha_2'(t)=-2b_2-b_4t+b_3t^3+2b_0 t^4&=0\label{eq:ha2prime}
\end{align}

By $b_0$ (\ref{eq:ha1prime}) - $a_0$ (\ref{eq:ha2prime}), we eliminate $t^4$ and obtain $-2X_{20}-X_{40}t+X_{30}t^3=0$.
It can be shown that $X_{40}=0$ as follows: use eq.~(aX.201)
with (\ref{eq:hcoeff0}, \ref{eq:hcoeff1}, \ref{eq:hcoeff2}) to obtain $a_1=0$;
similarly use eq.~(bX.201) to obtain $b_1=0$;
so $X_{1i}=0$ for all $i$;
now eq.~(\ref{eq:hcoeff3}) implies $X_{40}=0$.
Now eq.~(XX.0234): $X_{02}X_{34}-X_{03}X_{24}+X_{04}X_{23}=0$ implies
$\frac{1}{4}(-1)-X_{03}X_{24}+0X_{23}=0$, so  $X_{30}\ne 0$.
Therefore, $t=(-\frac{1}{2X_{30}})^{1/3}\equiv T$.

{\it Showing this possible singular point on $J_1$ satisfies $(\alpha_1'(T), \alpha_2'(T))=(0,0)$.}
Eq.~(\ref{eq:ha1prime}) implies

\begin{align}
\alpha_1'(T)&= (-2a_2(1/T^3)+a_3)+T(-a_4(1/T^3)+2a_0)\nonumber \\
&=[-a_2(-2X_{30})+a_3]+T[-a_4(-2X_{30})+2a_0]
\end{align}
Both expressions in the square brackets vanish, the first by (aX.302) and the second by (aX.403).
Similarly, by interchanging $a_i$ and $b_i$, $\alpha_2'(T)=0$.

{\it Showing $\boldsymbol\alpha(T)$ is a nondegenerate cusp.}
By Lemma \ref{lemma:cusp}, we must show 
$\alpha_1''(T) \alpha_2'''(T) - \alpha_1'''(T) \alpha_2''(T) \ne 0$.
Since $\boldsymbol\alpha'(T)=(0,0)$, eqs.~(\ref{eq:ha1prime}) and (\ref{eq:ha2prime})
can be used to show 
the nondegeneracy condition is equivalent to 
$A1'(T) A2''(T) - A1''(T) A2'(T) \ne 0$:
\begin{align}
A1'(T)& A2''(T) - A1''(T) A2'(T)\nonumber\\
&=(-a_4 + 3a_3T^2+8a_0T^3)(6b_3 T + 24b_0 T^2)\nonumber\\
&\hskip .3in -(-b_4 + 3b_3T^2+8b_0T^3)(6a_3 T + 24a_0 T^2)\nonumber\\
&=6X_{34}T+72X_{30}T^4+48X_{03}T^4\nonumber\\
&=6T(X_{34}+4X_{30} T^3)\nonumber\\
&=6T(-1 -2) \rm{\ by\ eq.\ (\ref{eq:hcoeff5})\ and\  definition\ of\ }T \nonumber\\
&=-18T \ne 0.
\end{align}

\end{enumerate}

\section*{Acknowledgments} BBP acknowledges useful discussions with Bernd Krauskopf and Hinke M. Osinga while preparing a related manuscript \cite{PKO} while on sabbatical leave at the University of Auckland during the first four months of 2013.  RPM acknowledges support from NSF Grant DMS-0940366.



\end{document}